%% file: MMZ.tex
\documentclass{amsart}
\usepackage{amsmath,amssymb,amsthm}
\usepackage{microtype}

\usepackage{orcidlink,enumitem}

\allowdisplaybreaks

\usepackage[capitalize,nameinlink]{cleveref}
\Crefname{enumi}{}{}
\Crefname{subsection}{Subsection}{Subsections}

\theoremstyle{plain}
\newtheorem{theorem}{Theorem}[section]

\newtheorem{proposition}[theorem]{Proposition}
\newtheorem{problem}[theorem]{Problem}
\newtheorem{fact}[theorem]{Fact}

\theoremstyle{definition}
\newtheorem{definition}[theorem]{Definition}
\newtheorem{example}[theorem]{Example}

\theoremstyle{remark}

\newcommand{\meq}{\mathop{=}\limits^{\mu}}

\makeatletter
\def\cref@thmoptarg[#1]#2#3#4{%
    \ifhmode\unskip\unskip\par\fi%
    \normalfont%
    \trivlist%
    \let\thmheadnl\relax%
    \let\thm@swap\@gobble%
    \thm@notefont{\fontseries\mddefault\upshape}%
    \thm@headpunct{.}
    \thm@headsep 5\p@ plus\p@ minus\p@\relax%
    \thm@space@setup%
    #2
    \@topsep \thm@preskip               
    \@topsepadd \thm@postskip           
    \def\@tempa{#3}\ifx\@empty\@tempa%
      \def\@tempa{\@oparg{\@begintheorem{#4}{}}[]}%
    \else%
      \refstepcounter[#1]{#3}
      \@namedef{cref@#3@alias}{#1}
      \def\@tempa{\@oparg{\@begintheorem{#4}{\csname the#3\endcsname}}[]}%
    \fi%
    \@tempa}%
\makeatother

\begin{document}
\title[Operators arising from invariant measures]{Operators arising from invariant measures under some class of multidimensional transformations}

\author{Oleksandr V. Maslyuchenko\orcidlink{0000-0002-1493-9399}, Janusz Morawiec\orcidlink{0000-0002-0310-867X}, 
and Thomas Zürcher\orcidlink{0000-0001-9179-9521}
}
\address{Institute of Mathematics, University of Silesia, Bankowa 14, 40-007 Katowice, Poland}\thanks{Corresponding author: Janusz Morawiec. Email: janusz.morawiec@us.edu.pl}

\keywords{
Linear operators; functional equations; Lipschitz-type functions; multidimensional transformations; average gradients; invariant measures}

\subjclass[2020]{Primary 47A50; Secondary 47A60, 39B12, 26A16}	

\begin{abstract}
We investigate a linear operator associated with a functional equation that arises from studying some class of invariant measures under multidimensional transformations. By examining its iterates, we derive an explicit solution formula for the functional equation in some class of functions and establish a result on the existence of an absolutely continuous invariant measure under a multidimensional transformation that can be viewed as a generalization of classical $p$\nobreakdash-adic maps to higher dimensions.
\end{abstract}

\maketitle


\section{Introduction}\label{section1}

The theory of dynamical systems is a concept that provides a unified mathematical framework for understanding how systems evolve over time. It has broad applicability across nearly every scientific field, including physics, biology, economics, and engineering (see, e.g., \cite{RH12, N13, JR15, A18, K23, A24, W24}).

An invariant measure is a probability measure that remains unchanged under the dynamics of a system. Invariant measures are crucial tools for describing the statistical properties of orbits, such as time averages, without the need to track individual trajectories.
They are fundamental in ergodic theory, where they help classify systems according to their mixing behaviour and randomness. In many systems, especially chaotic ones, invariant measures provide a way to analyse the \lq\lq typical\rq\rq\ behaviour of almost all points with respect to the measure. This leads to significant results such as the ergodic theorem and the existence of physical measures. These properties make invariant measures central objects of interest in the theory of dynamical systems (see, e.g., \cite{LM94,Z96,C02,D03,Y08,C20}).

In the study of invariant measures, appropriate operators are often used, particularly when investigating absolutely continuous invariant measures (see, e.g., \cite{LY73,B00,GB03,BG05,I12}). These measures are especially important because they describe how probability densities evolve.

Continuous and singular invariant measures are also interesting from the purely mathematical point of view. However, unlike the absolutely continuous case, no general method exists for their systematic investigation. An effective approach to studying invariant measures (including continuous and singular ones) is to reformulate the problem in terms of a functional equation in a single variable and analyse its solutions within the class of all probability distribution functions. While this method is not new (see, e.g., \cite{M85,N91}), to the best of our knowledge, it has only been employed sporadically in the literature (see \cite{MZ22,MS23}). 

The dyadic transformation is one of the maps for which a wide class of continuous and singular invariant measures has been obtained by solving the \emph{Matkowski--Wesołowski problem} (see \cite{MZ18,MZ21,MZ25}). This problem was posed in 1985 by Janusz Matkowski (see \cite{M85}) and independently, in an equivalent form during the 47th International Symposium on Functional Equations in 2009 by Jacek Wesołowski (see \cite{MZ18}). It consists in finding non-linear solutions of the \emph{Matkowski--Wesołowski functional equation} 
\begin{equation}\label{MWeq}
f(x)=f\left(\tfrac{x}{2}\right)-f(0)+f\left(\tfrac{x+1}{2}\right)-f\left(\tfrac{1}{2}\right)	
\end{equation}
in the class  $\mathcal D$ of all monotone and continuous functions $f\colon[0,1]\to\mathbb R$. Further results concerning extended versions of the Matkowski--Wesołowski functional equation in the one-dimensional setting can be found in \cite{MZ19}.

Since iteration is a fundamental technique for solving functional equations in a single variable and iterates usually appear in the formulas for their solutions (see \cite[Section~0.3]{KCG90}), it is natural to associate with the Matkowski--Wesołowski functional equation the \emph{Matkowski--Wesołowski operator} $\mathbb M\colon\mathcal D\to\mathcal D$ defined by 
\begin{equation}\label{MWop}
\mathbb M f(x)=f\left(\tfrac{x}{2}\right)-f(0)+f\left(\tfrac{x+1}{2}\right)-f\left(\tfrac{1}{2}\right).
\end{equation}
A counterpart of this operator has been used in \cite{MZa} to determine a general formula for probability distribution functions of all invariant measures under the $p$-addic transformations. 

The aim of this paper is to study a multidimensional version of the Matkowski--Wesołowski functional equation associated with a certain class of multidimensional transformations. The central focus is the analysis of the corresponding multidimensional Matkowski--Wesołowski operator arising from these transformations. Moreover, motivated by the study of invariant measures, we conclude with a result on the existence and uniqueness of absolutely continuous invariant measures for the class of transformations under consideration.


\section{Preliminaries}\label{section2}
We begin by fixing the main assumptions and notations used throughout this paper.

Assume that $r,s\in\mathbb N$ and set $K=\{1,\ldots,s\}$, $N=\{1,\ldots,r\}$, and $L=K\times N$.

We put $V=\mathbb R^K$. For any $x\in V^N$ we use the following double notation: $x=(x_n)_{n\in N }= (x_l)_{l\in L}$, where $x_n=(x_{n,k})_{k\in K}$ and $x_l=x_{n,k}$ if $l=(n,k)$, that is, we identify $V^N$ with $\mathbb{R}^L$. 

We fix a set $D\subseteq V^N$ such that $0=(0,\ldots,0)\in D$. Furthermore, we fix a countable set $I$ (finite or infinite) containing at least two elements and an indexed family $\gamma=(\gamma^i)_{i\in I}$ of self-mappings of the set $D$.

We denote by $\mu_n$ the Lebesgue measure on $\mathbb{R}^n$; when the dimension is clear from the context, we simply write $\mu$.
Recall that an indexed family $(A^i)_{i\in I}$ of Lebesgue measurable subsets of $\mathbb R^n$ is called \emph{$\mu$\nobreakdash-almost disjoint} if $\mu(A^i\cap A^j)=0$ whenever $i\neq j$. Given a $\mu$\nobreakdash-almost disjoint indexed family $(A^i)_{i\in I}$ of subsets of $\mathbb R^n$, we denote its union by $\bigsqcup_{i\in I}^\mu A^i$.	

Given $A,B\subseteq\mathbb R^n$, we write $A\meq B$ (and say that $A=B$ \emph{$\mu$\nobreakdash-almost everywhere}) if $\mu(A\triangle B)=0$, where $\triangle $ denotes the symmetric difference operator of sets.

We denote by $[x,y]$ the segment joining points $x,y\in V$. For all $a=(a_n)_{n\in N}\in V^N$ and $b=(b_n)_{n\in N}\in V^N$, we define $P(a,b)=\prod_{n\in N}[a_n,b_n]$.

The family of all Borel sets in a topological space $X$ is denoted by $\mathcal{B}(X)$. 

Furthermore, we denote by $\|\cdot\|$ the Euclidean norm, and by $|S|$ the cardinality of a set $S$.

Let $M\subseteq N$. 

We denote by $x|_M$ the restriction of $x\in V^N$ to $M$. Thus, if $x=(x_n)_{n\in N}\in V^N$, then $x|_M=(x_n)_{n\in M}\in V^M$, $x|_{N\setminus M}=(x_n)_{n\in N\setminus M}\in V^{N\setminus M}$, and $x|_M\cup x|_{N\setminus M}=x$.

For a function~$f$ defined on $D$ and all $x\in V^M$, we use the notation
\begin{equation*}
f_x(y)=f(x\cup y)\quad \text{for every }y\in V^{N\setminus M}\text{ such that }x\cup y\in D.
\end{equation*}
So, the function~$f_x$ is defined on the set $D_x=\{y\in V^{N\setminus M}: x\cup y\in D\}$.

If $m\in N$, then for each $x\in V$ there exists the unique function $x{:}m\in V^{\{m\}}$ attaining the value $x$ at $m$; formally, $x{:}m=\{(m,x)\}$. In such a setting, we define
for all $x\in V$ and $y\in V^{N\setminus\{m\}}$
\begin{equation*}
(x,y)_m=(x{:}m)\cup y,
\end{equation*}
i.e.\ for any $z=(z_n)_{n\in N}\in V^N$ we have $z=(x,y)_m$ if $z_m=x$ and $z_n=y_n$ for any $n\in N\setminus\{m\}$.
Furthermore, for all $x\in V$, $y\in D_{x{:}m}$, and a function $f$ defined on $D$, we set
\begin{equation*}
f_{x{:}m}(y)=f\big((x,y)_m\big);
\end{equation*}
for simplicity we write $f(x,y)_m$ instead of $f\big((x,y)_m\big)$.

Given all $A\subseteq V$ and $B\subseteq V^{N\setminus\{m\}}$, we set
\begin{equation*}
A\times_mB=\big\{(x,y)_m :x\in A,y\in B\big\}.
\end{equation*}


\section{Multidimensional increment of functions}\label{section3}
For all $x=(x_n)_{n\in N}\in V^N$ and $y=(y_n)_{n\in N}\in V^N$, we define
\begin{equation*}
\Pi(x,y)=\prod_{n\in N}\{x_n,y_n\}.
\end{equation*}
Note that $\Pi$ is an interval operator, which generates a convex structure on $V^N$ in the sense of \cite[Chapter I, \S 4]{V93}.
So, we use the terminology from \cite{V93}. In particular, we say that a set $A\subseteq V^N$ is \emph{$\Pi$\nobreakdash-convex} if $\Pi(x,y)\subseteq A$ for all $x,y\in A$. More generally, if $Z$ is a set, $\mathcal{P}(Z)$ denotes its power set, and $J\colon Z\times Z\to \mathcal P(Z)$ is an operator, then a set $A\subseteq Z$ is called \emph{$J$\nobreakdash-convex} if $J(x,y)\subseteq A$ for all $x,y\in A$. 

Furthermore, for all $x=(x_n)_{n\in N}\in V^N$, $y=(y_n)_{n\in N}\in V^N$, and $M\subseteq N$, we define 
\begin{equation*}
\pi_M(x,y)=x|_M\cup y|_{N\setminus M},
\end{equation*}
i.e. $\pi_M(x,y)=(\pi_{M,n}(x,y))_{n\in N}$, where
$\pi_{M,n}(x,y)=x_n$ if $n\in M$, and $\pi_{M,n}(x,y)=y_n$ if $n\in N\setminus M$.
Therefore, we have 
\begin{equation*}
\Pi(x,y)=\{\pi_M(x,y):M\subseteq N\}.
\end{equation*}

Assume that $X\subseteq V^N$ and let $x,y\in X$ be such that $\Pi(x,y)\subseteq X$. The \emph{multidimensional increment of a function $f\colon X\to\mathbb{R}$ at $(x,y)$} is the number	
\begin{equation}\label{equ:defIncrement}
\square f(x,y)=\sum_{M\subseteq N}(-1)^{|M|}f\big(\pi_M(x,y)\big).
\end{equation}

Note that in the case where $X$ is $\Pi$\nobreakdash-convex, we have $\Pi(x,y)\subseteq X$ for all $x,y\in X$, and hence the multidimensional increment of any function $f\colon X\to\mathbb{R}$ is well-defined at every $(x,y)\in X\times X$. 
Moreover, for $r=1$ we have $\square f(x,y)=f(y)-f(x)$, and hence the multidimensional increment coincides with the one-dimensional increment. We now present an equivalent formula for the multidimensional increment, which allows for an inductive definition analogous to the one-dimensional case.

\begin{proposition}\label{prop:indIncrement}
Assume that $m\in N$ and let $f\colon D\to\mathbb{R}$. Then 
\begin{equation*}\label{equ:indIncrement}
\square f(x,y)=\square f_{y_m{:}m}\big(x|_{N\setminus \{m\}},y|_{N\setminus \{m\}}\big)-\square f_{x_m{:}m}\big(x|_{N\setminus \{m\}},y|_{N\setminus \{m\}}\big)
\end{equation*}
for all $x=(x_n)_{n\in N}\in D$ and $y=(y_n)_{n\in N}\in D$ such that $\Pi(x,y)\subseteq D$.
\end{proposition}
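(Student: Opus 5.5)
The plan is to split the sum in \eqref{equ:defIncrement} according to whether $m\in M$ or $m\notin M$, and to recognise each of the two partial sums as a multidimensional increment of one of the sections $f_{y_m{:}m}$ or $f_{x_m{:}m}$ on the index set $N\setminus\{m\}$. Write $N'=N\setminus\{m\}$, $x'=x|_{N'}$ and $y'=y|_{N'}$. Throughout, the increments for the sections are understood with $N'$ in place of $N$: for $M'\subseteq N'$ we set $\pi_{M'}(x',y')=x'|_{M'}\cup y'|_{N'\setminus M'}$.

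First, I check that the right-hand side makes sense. For every $M'\subseteq N'$, both points $(x_m,\pi_{M'}(x',y'))_m$ and $(y_m,\pi_{M'}(x',y'))_m$ belong to $\Pi(x,y)\subseteq D$. Hence $\Pi(x',y')\subseteq D_{x_m{:}m}\cap D_{y_m{:}m}$, so both increments on the right are well defined.

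Next, the map $M\mapsto M\setminus\{m\}$ is a bijection from $\{M\subseteq N: m\notin M\}$ onto $\mathcal P(N')$. The same map is also a bijection from $\{M\subseteq N: m\in M\}$ onto $\mathcal P(N')$, whose inverse is $M'\mapsto M'\cup\{m\}$. Directly from the definition of $\pi_M$ we get two identities:
\begin{equation*}
\pi_{M'}(x,y)=\big(y_m,\pi_{M'}(x',y')\big)_m,\qquad \pi_{M'\cup\{m\}}(x,y)=\big(x_m,\pi_{M'}(x',y')\big)_m .
\end{equation*}
The first holds because $m\notin M'$ means coordinate $m$ is taken from $y$; the second because coordinate $m$ is taken from $x$.

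For the signs, $|M'\cup\{m\}|=|M'|+1$, so $(-1)^{|M'\cup\{m\}|}=-(-1)^{|M'|}$. Splitting the sum and substituting these identities gives
\begin{equation*}
\square f(x,y)=\sum_{M'\subseteq N'}(-1)^{|M'|}f_{y_m{:}m}\big(\pi_{M'}(x',y')\big)-\sum_{M'\subseteq N'}(-1)^{|M'|}f_{x_m{:}m}\big(\pi_{M'}(x',y')\big).
\end{equation*}
By definition, these two sums are exactly $\square f_{y_m{:}m}(x',y')$ and $\square f_{x_m{:}m}(x',y')$, which is the claimed formula.

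The argument is pure bookkeeping, and the only delicate point is the degenerate case $r=1$. Then $N'=\emptyset$, and $V^{\emptyset}$ consists of the single empty function. The increment over $N'$ reduces to one term, with $M'=\emptyset$ and sign $+1$, so $\square g(\emptyset,\emptyset)=g(\emptyset)$. The formula then reads $f(y)-f(x)$, consistent with the remark preceding the proposition. I will state this convention explicitly so that the identity also covers $r=1$.
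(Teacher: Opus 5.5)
Your proposal is correct and is essentially the paper's proof. You split the sum according to whether $m\in M$, use the identities $\pi_{M'}(x,y)=(y_m,\pi_{M'}(x',y'))_m$ and $\pi_{M'\cup\{m\}}(x,y)=(x_m,\pi_{M'}(x',y'))_m$ with the sign flip, and recognise the two sums as the increments of the sections. Your explicit check that $\Pi(x',y')\subseteq D_{x_m{:}m}\cap D_{y_m{:}m}$ and your convention for $r=1$ (where $N'=\emptyset$) are small additions that the paper leaves implicit.
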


\begin{proof} 
Put $N'=N\setminus\{m\}$. Using \eqref{equ:defIncrement}, we obtain
\begin{equation}\label{equ:indIncrement1}
\begin{split}
\square f(x,y)&=\sum_{M\subseteq N}(-1)^{|M|}f\big(\pi_M(x,y)\big)\\
&=\sum_{M\subseteq N'}(-1)^{|M|}f\big(\pi_M(x,y)\big)
+\sum_{M\subseteq N'}(-1)^{|M\cup\{m\}|}f\big(\pi_{M\cup\{m\}}(x,y)\big)\\ 
&=\sum_{M\subseteq N'}(-1)^{|M|}f\big(\pi_M(x,y)\big)
-\sum_{M\subseteq N'}(-1)^{|M|}f\big(\pi_{M\cup\{m\}}(x,y)\big).
\end{split}
\end{equation}

Fix $M\subseteq N'$. Since $m\notin N'$, we have 
\begin{equation*}
\pi_{M,m}(x,y)=y_m\quad\text{and}\quad \pi_{M\cup\{m\},m}(x,y)=x_m,
\end{equation*}
whereas, putting $x'=x|_{N'}$ and $y'=y|_{N'}$, we have for every $n\in N'$
\begin{equation*}
\pi_{M,n}(x,y)=\pi_{M,n}(x',y')\quad\text{and}
\quad\pi_{M\cup \{m\},n}(x,y)=\pi_{M\cup\{m\},n}(x',y').
\end{equation*}
Therefore,
\begin{equation*}
\pi_M(x,y)=(y_m,\pi_M(x',y')|_{N'})_m\quad\text{and}\quad 
\pi_{M\cup\{m\}}(x,y)=(x_m,\pi_M(x',y')|_{N'})_m.
\end{equation*}

In consequence, returning to \eqref{equ:indIncrement1} and again using \eqref{equ:defIncrement}, we obtain
\begin{align*}
\square f(x,y)&=\sum_{M\subseteq N'}\!(-1)^{|M|}f(y_m,\pi_M(x',y')|_{N'})_m
-\!\!\!\!\!\!\!\!\!\sum_{M\subseteq N\setminus \{m\}}\!\!\!\!\!\!(-1)^{|M|}f(x_m,\pi_M(x',y')|_{N'})_m\\
&=\sum_{M\subseteq N'}(-1)^{|M|}f_{y_m:m}\big(\pi_M(x',y')\big) 
-\sum_{M\subseteq N'}(-1)^{|M|}f_{x_m:m}\big(\pi_M(x',y')\big)\\
&=\square f_{y_m:m}(x',y')-\square f_{x_m:m}(x',y'),
\end{align*}
which completes the proof. 
\end{proof}


\section{Gradient with respect to groups of variables}\label{section4}
Throughout this section, we assume that the set $D$ is open in $V^N$. We also fix $f\colon D\to\mathbb{R}$ and $M=\{m_1,\dots,m_p\}\subseteq N$, assuming that $p$ is a positive integer and $m_1<\dots<m_p$.

For any $l=(n,k)\in L$ we denote the $l$\nobreakdash-th partial derivative of $f$ at $x\in D$ by $\partial_lf(x)$. We say that $f$ is
\emph{separately differentiable} if for all $x\in D$ and $l\in L$ the $l$\nobreakdash-th partial derivative $\partial_lf(x)$ exists. We denote the \emph{gradient} of $f$ at $x\in D$ by
\begin{equation*}
\nabla f(x)=\big(\partial_lf(x)\big)_{l\in L},
\end{equation*}
and so $\nabla f(x)u=\sum_{l\in L}\partial_lf(x)u_l$ for every $u=(u_l)_{l\in L}\in V^N$.
Next, for any $n\in N$, we define the \emph{$n$\nobreakdash-th partial gradient} of $f$ at $x\in D$ by
\begin{equation*}\label{nderel}
\nabla\!_n f(x)=\big(\partial_{(n,k)}f(x)\big)_{k\in K},
\end{equation*}
and set $\nabla\!_n f(x)v=\sum_{k\in K}\partial_{(n,k)} f(x)v_k$ for every $v=(v_k)_{k\in K}\in V$.

Let $\lambda=(\lambda_k)_{k\in K^M}\in\mathbb{R}^{K^M}$ be an indexed family. For any $u\in V^M$, we define
\begin{equation}\label{equ:notation_lambda_xN}
\lambda u ^M =\sum_{k\in K^M}\lambda_k \odot^M_k(u),\quad\text{where } 
\odot_k^M(u)=\prod_{m\in M}u_{m,k_m}.
\end{equation}
Obviously, $\lambda u^M$  is an $|M|$\nobreakdash-linear functional on $V^M$.

Now, for every $u\in V^M$, we put $\|u\|^M =\prod_{m\in M}\|u_{m}\|$. We show that the following analogue of the Cauchy–Bunyakovsky–Schwarz inequality holds:
\begin{equation}\label{equ:CBSinequality}
\left|\lambda u^M \right|\le \|\lambda\|\cdot \|u\|^M.
\end{equation}

Observe first that
\begin{equation*}
\sum_{k\in K^M}\prod_{m\in M}u_{m,k_m}^2=\prod_{m\in M}\sum_{k\in K}u_{m,k}^2.  
\end{equation*}
This together with the classical Cauchy–Bunyakovsky–Schwarz inequality gives
\begin{align*}
\left|\lambda u^M \right|^2
&\!\leq\sum_{k\in K^M} \lambda_k^2\sum_{k\in K^M}\!\big(\!\odot^M_k(u)\big)^2
=\|\lambda\|^2\sum_{k\in K^M}\prod_{m\in M}u_{m,k_m}^2
=\|\lambda\|^2\prod_{m\in M}\sum_{k\in K}u_{m,k} ^2\\
&=\|\lambda\|^2\prod_{m\in M}\|u_{m} \|^2
=\|\lambda\|^2 \left(\|u\|^M\right)^2.
\end{align*}

Let us pass to the next definition.
For any $k=(k_m)_{m\in M}\in K^M$, we put $\widetilde\partial_kf=\partial_{(m_1, k_{m_1})}
\cdots\partial_{(m_p, k_{m_p})}f$. Then we define the \emph{$M$\nobreakdash-gradient} of~$f$ at $x\in D$ by
\begin{equation*}
\nabla\!_Mf(x)=\big(\widetilde\partial_kf(x)\big)_{k\in K^M}.
\end{equation*}
Observe that using the notation established in \eqref{equ:notation_lambda_xN}, we have
\begin{equation*}
\nabla\!_Mf(x)u^M =\sum_{k\in K^M}\widetilde{\partial}_kf(x)\odot^M_k(u)
\end{equation*}
for every $u\in V^M$.
In the case $s=1$, identifying $(m_k,1)$ with $m_k$, we have 
$\nabla\!_Mf(x)=\partial_Mf(p)=\partial_{m_1}
\cdots \partial_{m_p}f(x)$ and $\nabla\!_M f(x)u^M=\partial_Mf(x)u_{m_1}
\dots u_{m_p}$ for every $x\in D$.


\section{Multidimensional analogue of Lipschitz functions}\label{section5}
Let $D\subseteq V^N$ be a closed subset.
We say $f\colon D\to\mathbb R$ is a \emph{$\mathrm{C}^N$\nobreakdash-function} if there exists an extension $\widetilde{f}\colon V^N\to\mathbb{R}$ of $f$ such that for every $M\subseteq N$  the gradient~$\nabla\!_{M}\widetilde{f}$ exists at each point in~$V^N$ and is continuous on~$V^N$, i.e.\ each partial derivative $\widetilde{\partial}_k\widetilde{f}$ is continuous for every $k\in K^M$.

Note that $f\in C^r(V^N,\mathbb R)$ implies that $f$ is a $\mathrm{C}^N$\nobreakdash-function; however, the converse implication does not hold.
Observe also that for every $M\subseteq N$ the gradient $\nabla\!_M\widetilde{f}$ may depend on the extension~$\widetilde{f}$.

It is possible to show that in the definition of $\mathrm{C}^N$\nobreakdash-functions we only need to extend $f$ to an open superset of $D$.  For $r=1$, this follows from Whitney’s Extension Theorem (see \cite[Theorem 6.5.1]{EvGa}).

We say that $f$ is \emph{$N$\nobreakdash-dimensional Lipschitz} if there exists a constant $C\ge 0$ such that 
\begin{equation*}
\left|\square f(x,y)\right|\le C\|x-y\|^N\quad\text{for all }x,y\in X
\text{ such that }\Pi(x,y)\subseteq D.
\end{equation*}

Note that in the case where $r=1$, the class of $N$\nobreakdash-dimensional Lipschitz functions coincides with the usual class of Lipschitz functions. Observe also that an $N$\nobreakdash-dimensional Lipschitz function need not even be continuous.

We now prove the following version of the mean value theorem.

\begin{proposition}\label{prop:finIncrementNdim}
If $f\colon V^N\to \mathbb{R}$ is a $\mathrm{C}^N$\nobreakdash-function, then for all $a,b\in V^N$ there exists $c\in P(a,b)$ such that $\square f(a,b)=\nabla\!_Nf(c)(b-a)^N$.
\end{proposition}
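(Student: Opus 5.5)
The proof goes by induction on $r=|N|$. The idea is to peel off one block of variables with \cref{prop:indIncrement} and use the ordinary one-variable mean value theorem on that block. The induction is really over finite ordered index sets $N$, so the hypothesis can be applied to $N'=N\setminus\{m\}$ with the inherited order.

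\emph{Base case $r=1$.} Here $\square f(a,b)=f(b)-f(a)$. Put $\varphi(t)=f(a+t(b-a))$ for $t\in[0,1]$. Since all first partials $\partial_{(1,k)}f$ exist and are continuous on $V$, $f$ is $C^1$. By the chain rule $\varphi'(t)=\nabla\!_1 f(a+t(b-a))(b-a)$. The classical mean value theorem gives $t\in(0,1)$, and $c=a+t(b-a)$ lies in $[a,b]=P(a,b)$.

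\emph{Inductive step.} Take $m=m_1$, the smallest element of $N$, and put $N'=N\setminus\{m\}$, $a'=a|_{N'}$, $b'=b|_{N'}$. By \cref{prop:indIncrement} and linearity of $\square$ in $f$,
\begin{equation*}
\square f(a,b)=\square h(a',b'),\qquad h(z)=f(b_m,z)_m-f(a_m,z)_m,\quad z\in V^{N'}.
\end{equation*}
The function $h$ is a $\mathrm{C}^{N'}$-function on $V^{N'}$. Indeed, for $M'\subseteq N'$ its $M'$-gradient is the difference of the $M'$-gradients of $f$ at $(b_m,z)_m$ and $(a_m,z)_m$, which exist and are continuous by hypothesis. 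The induction hypothesis then gives $c'\in P(a',b')$ with
\begin{equation*}
\square f(a,b)=\nabla\!_{N'}f(b_m,c')_m(b'-a')^{N'}-\nabla\!_{N'}f(a_m,c')_m(b'-a')^{N'}.
\end{equation*}
Now define
\begin{equation*}
\psi(t)=\nabla\!_{N'}f\big(a_m+t(b_m-a_m),c'\big)_m(b'-a')^{N'},
\end{equation*}
so that $\square f(a,b)=\psi(1)-\psi(0)$.

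The key point is that $\psi$ is differentiable, with derivative equal to $\nabla\!_Nf\big(a_m+t(b_m-a_m),c'\big)_m(b-a)^N$. Because $m=m_1$ is the first index, for $k\in K^N$ we have $\widetilde\partial_kf=\partial_{(m,k_m)}\widetilde\partial_{k|_{N'}}f$. Here $\partial_{(m,k_m)}$ is applied \emph{outermost}, so no interchange of mixed partial derivatives (Schwarz-type theorem) is needed. Consequently, for each fixed $k'\in K^{N'}$, the map $x_m\mapsto\widetilde\partial_{k'}f(x_m,c')_m$ on $V$ has continuous first partials $\widetilde\partial_{(k_m)\cup k'}f$, and so it is $C^1$. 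The chain rule then gives
\begin{equation*}
\psi'(t)=\sum_{k'\in K^{N'}}\sum_{j\in K}\widetilde\partial_{(j{:}m)\cup k'}f\big(a_m+t(b_m-a_m),c'\big)_m\,(b_{m,j}-a_{m,j})\odot^{N'}_{k'}(b'-a'),
\end{equation*}
and this is exactly $\nabla\!_Nf(\cdot)(b-a)^N$, since $\odot^N_k(b-a)$ factors as $(b_{m,k_m}-a_{m,k_m})\,\odot^{N'}_{k|_{N'}}(b'-a')$.

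The one-variable mean value theorem now yields $t\in(0,1)$ with $\psi(1)-\psi(0)=\psi'(t)$. Setting $c=(a_m+t(b_m-a_m),c')_m$, we get $c\in[a_m,b_m]\times_mP(a',b')=P(a,b)$, as required.

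The main obstacle I anticipate is this bookkeeping: checking that $h$ is genuinely $\mathrm{C}^{N'}$ and that the derivative of $\psi$ matches the ordered mixed gradient $\nabla\!_N$. Choosing $m=m_1$ is meant to avoid any symmetry-of-mixed-partials issue. If the ordering convention in $\widetilde\partial_k$ turns out to apply $\partial_{(m_1,\cdot)}$ innermost instead, I would take $m=m_p$, the largest index, and proceed in the same way.
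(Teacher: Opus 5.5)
Your proof is correct and follows essentially the same route as the paper: induction on $r$, splitting off one block of variables via \cref{prop:indIncrement}, applying the inductive hypothesis to the difference of the two slices, and finishing with the one-variable mean value theorem along $[a_m,b_m]$. The only difference works in your favour. The paper splits off the last index $r$ and then uses $\partial_{(r,k)}\widetilde\partial_{k'}f=\widetilde\partial_{(k,k')_r}f$, together with differentiability of its auxiliary function; under the convention $\widetilde\partial_kf=\partial_{(m_1,k_{m_1})}\cdots\partial_{(m_p,k_{m_p})}f$ this tacitly interchanges mixed partials, whereas your choice $m=m_1$ makes the identity hold by definition, so no Schwarz-type argument is needed.
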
 

\begin{proof} 
We begin with the case $r=1$. We define the function $g\colon[0,1]\to\mathbb{R}$ by $g(t)=f\left(t(b-a)+a\right)$. 
Then, by the mean value theorem (also known as the Lagrange finite-increment formula) there exists $\tau\in[0,1]$ such that $g(1)-g(0)=g'(\tau)$. 
Putting $c=\tau(b-a)+a$, we see that $c\in [a,b]$, and then
\begin{equation*}
\square f(a,b)=f(b)-f(a)=g(1)-g(0)=g'(\tau)=\nabla f(c)(b-a)=\nabla_N f(c)(b-a)^N.
\end{equation*} 

Suppose now that the claim of the proposition holds for every $p\in\{1,\ldots,r-1\}$ and the set $N_p=\{1,\ldots,p\}\subseteq N$. 
Fix a $\mathrm{C}^{N}$\nobreakdash-function $f\colon V^{N}\to \mathbb{R}$, 
$a=(a_n)_{n\in N}\in V^{N}$ and $b=(b_n)_{n\in N}\in V^{N}$. Put $N'=N_{r-1}$,
$a'=(a_n)_{n\in N'}\in V^{N'}$, and $b'=(b_n)_{n\in N'}\in V^{N'}$. 
Moreover, we easily conclude that the formula $g=f_{b_{r}{:}r}-f_{a_{r}{:}r}$ defines a $\mathrm{C}^{N'}$\nobreakdash-function. Then, by \cref{prop:indIncrement} and the inductive hypothesis, we obtain the existence of $c'\in P(a',b')$ such that 
\begin{align*}
\square f(a,b)&=\square g(a',b')=\nabla\!_{N'}g(c')(b'-a')^{N'}\\
&=\nabla\!_{N'}f_{b_{r}{:}r}(c')(b'-a')^{N'}
-\nabla\!_{N'}f_{a_{r}{:}r}(c')(b'-a')^{N'}.
\end{align*}
Define $h\colon V\to\mathbb{R}$ by
$
h(x)=\nabla\!_{N'}f_{x{:}r}(c')(b'-a')^{N'}
$
and note that $h$ is differentiable since $f$ is a $\mathrm{C}^{N}$\nobreakdash-function.
Again, by the inductive hypothesis, we obtain the existence of $c_{r}\in [a_{r},b_{r}]\subseteq V$ such that $\square f(a,b)=\square h(a_r,b_r)=\nabla h(c_r)(b_r-a_r)$. Finally, putting  $c=(c_r,c')_r$, we obtain 
\begin{align*}
\square f(a,b)&=\nabla h(c_r)(b_r-a_r)
=\sum_{k\in K}\frac{\partial h}{\partial x_k}(c_r)(b_{r,k}-a_{r,k})\\
&=\sum_{k\in K}\frac{\partial}{\partial x_k} \big(\nabla\!_{N'}f_{x{:}r} (c')(b'-a')^{N'}\big)\big|_{x=c_r}(b_{r,k}-a_{r,k})\\
&=\sum_{k\in K}\left.\frac{\partial}{\partial x_k}\left(\sum_{k'\in K^{N'}}
\widetilde{\partial}_{k'}f(x,c')_r\odot^{N'}_{k'}(b'-a')\right)
\right|_{x=c_r}(b_{r,k}-a_{r,k})\\
&=\sum_{k\in K}\sum_{k'\in K^{N'}}\partial_{(r,k)}\widetilde{\partial}_{k'}
f(c)(b_{r,k}-a_{r,k})\odot^{N'}_{k'}(b'-a')\\
&=\sum_{(k,k')_r\in K^{N}}\widetilde{\partial}_{(k,k')_r}f(c)\odot^N_{(k,k')_r}(b-a)
=\sum_{\ell\in K^{N}}\widetilde{\partial}_{\ell}f(c)\odot^N_{\ell}(b-a)\\
&=\nabla\!_{N}f(c)(b-a)^{N},
\end{align*}
and the proof is complete.
\end{proof}

\begin{proposition}\label{prop:NdimLipschitzOfCNfunction}
Assume that $f\colon V^N\to \mathbb{R}$ is a $\mathrm{C}^N$\nobreakdash-function such that $\nabla\!_Nf$ is bounded on a $P$\nobreakdash-convex set $X\subseteq V^N$. Then $f$ is $N$\nobreakdash-dimensional Lipschitz on~$X$. 
\end{proposition}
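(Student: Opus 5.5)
The plan is to combine the mean value theorem of \cref{prop:finIncrementNdim} with the Cauchy--Bunyakovsky--Schwarz type inequality \eqref{equ:CBSinequality}. Let $C=\sup_{c\in X}\|\nabla\!_Nf(c)\|$, which is finite because $\nabla\!_Nf$ is bounded on $X$; here $\|\cdot\|$ is the Euclidean norm on $\mathbb{R}^{K^N}$. We will show that this $C$ works as the $N$\nobreakdash-dimensional Lipschitz constant of $f$ on $X$.

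Fix $x,y\in X$ with $\Pi(x,y)\subseteq X$, so that $\square f(x,y)$ is defined. By \cref{prop:finIncrementNdim} applied to $a=x$ and $b=y$, there is a point $c\in P(x,y)$ with
\begin{equation*}
\square f(x,y)=\nabla\!_Nf(c)(y-x)^N .
\end{equation*}
We read ``$P$\nobreakdash-convex'' as meaning that $P(x,y)\subseteq X$ whenever $x,y\in X$. Under that reading $c\in X$, and hence $\|\nabla\!_Nf(c)\|\le C$.

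Next apply \eqref{equ:CBSinequality} with $M=N$, $\lambda=\nabla\!_Nf(c)$ and $u=y-x$. This gives
\begin{equation*}
|\square f(x,y)|\le\|\nabla\!_Nf(c)\|\cdot\|y-x\|^N\le C\|x-y\|^N,
\end{equation*}
where $\|y-x\|^N=\prod_{n\in N}\|y_n-x_n\|$ is symmetric in $x$ and $y$. This is exactly the defining inequality, so $f$ is $N$\nobreakdash-dimensional Lipschitz on $X$.

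The only delicate step is making sure the intermediate point $c$ lies in $X$, since boundedness of $\nabla\!_Nf$ is assumed only there. This is precisely what $P$\nobreakdash-convexity of $X$ provides, because \cref{prop:finIncrementNdim} places $c$ in the box $P(x,y)$. The rest is routine: $\Pi(x,y)\subseteq P(x,y)$, and the mean value theorem is applied to $f$ on all of $V^N$, where it is a $\mathrm{C}^N$\nobreakdash-function.
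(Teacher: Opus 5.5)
Your proposal is correct and is essentially the paper's own proof. You take $C=\sup_{X}\|\nabla\!_Nf\|$, use \cref{prop:finIncrementNdim} together with the $P$\nobreakdash-convexity of $X$ to place the intermediate point in $P(x,y)\subseteq X$, and conclude with the Cauchy--Bunyakovsky--Schwarz type inequality \eqref{equ:CBSinequality}, exactly as the paper does.
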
 

\begin{proof}
Since $\nabla\!_Nf$ is bounded on $X$, the number $C=\sup_{x\in X}\big\|\nabla\!_Nf(x)\big\|$ is finite. Fix $a,b\in X$.
\Cref{prop:finIncrementNdim} together with the $P$\nobreakdash-convexity of $X$ implies that there exist $z\in P(a,b)\subseteq X$ such that
$\square f(a,b)=\nabla\!_Nf(z)(b-a)^N$. Finally, by \eqref{equ:CBSinequality}, we get
\begin{equation*}
\big|\square f(a,b)\big|
=\big|\nabla\!_Nf(z)(b-a)^N\big|
\le \big\|\nabla\!_Nf(z)\big\|
\cdot \|b-a\|^N
\le C\|b-a\|^N,
\end{equation*}
and the proof is complete.
\end{proof}
	

\section{The gradient increment and the average gradient of multidimensional Lipschitz functions}\label{section6}
Assume that $f\colon V^N\to\mathbb{R}$ is a $\mathrm{C}^N$\nobreakdash-function. For any compact set $T\subseteq V^N$, we define the \emph{$N$\nobreakdash-gradient increment} of $f$ on $T$ by 
\begin{equation*}
\widetilde{\nabla}\!_Nf(T)=\int_T \nabla\!_N f(x)\, d\mu(x)
=\left(\int_T \widetilde{\partial}_{k}f(x)\, d\mu(x)\right)_{k\in K^N},
\end{equation*}
and for any $u\in V^N$, we put
\begin{equation*}
\widetilde{\nabla}\!_Nf(T)u=\int_T \nabla\!_N f(x)u\,d\mu(x)
=\sum_{k\in K^N}\odot^N_k(u)\int_T\widetilde{\partial}_{k}f(x)\, d\mu(x).
\end{equation*}
Then, assuming that $T$ is of positive Lebesgue measure, we define the \emph{average $N$\nobreakdash-gradient of $f$ on~$T$} by 
\begin{equation*}
\overline{\nabla}\!_Nf(T)=\tfrac{1}{\mu(T)}\widetilde{\nabla}\!_Nf(T)
\end{equation*}
and $\overline{\nabla}\!_Nf(T)u=\tfrac{1}{\mu(T)}\widetilde{\nabla}\!_Nf(T)u$ for every $u\in\mathbb{R}^K$.

We conclude this section with a result that shows an equality reminiscent of the fundamental theorem of calculus. Since the result will not be used in this paper, we omit the proof.

\begin{fact}
Assume that $f\colon V^N\to\mathbb{R}$ is a $\mathrm{C}^N$\nobreakdash-function. Let $a=(a_n)_{n\in N}\in\mathbb{R}^N$ and $b=(b_n)_{n\in N}\in\mathbb{R}^N$. If $a_n\le b_n$ for every $n\in N$, then
\begin{equation*}
\widetilde{\nabla}_Nf(P(a,b))=\int_{P(a,b)}\partial_Nf\, d\mu=\square f(a,b).
\end{equation*}
\end{fact}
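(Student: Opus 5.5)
The plan is to prove the Fact by induction on $r=|N|$, using \cref{prop:indIncrement} to peel off one block of variables at a time and Fubini's theorem to split the integral over $P(a,b)$ accordingly. Here $a_n\le b_n$ is read coordinatewise, i.e.\ $a_{n,k}\le b_{n,k}$ for all $k\in K$. Then $P(a,b)=\prod_{n\in N}[a_n,b_n]$ is a product of boxes in $V=\mathbb R^K$, and $\mu$ on $V^N$ is the product of the Lebesgue measures on the factors. For the statement to make sense, $\partial_N f$ must be the full mixed derivative over all $l\in L$, namely $\partial_{(1,1)}\cdots\partial_{(1,s)}\cdots\partial_{(r,s)}f$. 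This is continuous by the $\mathrm{C}^N$ assumption, being among the iterated partials we may take; I read $\square$ here as the increment over all $r s$ one-dimensional coordinates. For $s=1$ this coincides with $\widetilde\partial_k f$ for the unique $k\in K^N$. In general I would first reduce to that case: since $V^N=\mathbb R^L$, one may regard $f$ as a function of $|L|$ scalar blocks.

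The base case $r=1$ with $s=1$ is the fundamental theorem of calculus: $\int_{a}^{b} f'\,d\mu = f(b)-f(a)=\square f(a,b)$.

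For the inductive step, fix $m=r$ and write $N'=N\setminus\{r\}$, $a'=a|_{N'}$, $b'=b|_{N'}$. Then $P(a,b)=[a_r,b_r]\times_r P(a',b')$. By Fubini, which applies because the integrand is continuous on a compact box, we get
\[\int_{P(a,b)}\partial_N f\,d\mu=\int_{P(a',b')}\Big(\int_{[a_r,b_r]}\partial_r\partial_{N'}f(x,y)_r\,d\mu(x)\Big)d\mu(y).\]
The inner integral equals $\partial_{N'}f_{b_r:r}(y)-\partial_{N'}f_{a_r:r}(y)$ by the fundamental theorem of calculus in the variable $x$. The main technical point is that the order of mixed partial derivatives can be interchanged, so that $\partial_N f=\partial_r\partial_{N'}f$. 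This follows from continuity of all the relevant mixed partials, by Schwarz's theorem applied iteratively. Alternatively one can avoid it by ordering the differentiation so that $\partial_r$ is applied last, which matches the definition of $\widetilde\partial_k$ up to relabeling.

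Next, both $f_{b_r:r}$ and $f_{a_r:r}$ are $\mathrm{C}^{N'}$-functions, so the inductive hypothesis turns the outer integral into $\square f_{b_r:r}(a',b')-\square f_{a_r:r}(a',b')$. By \cref{prop:indIncrement} this equals $\square f(a,b)$, which completes the induction.

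The expected obstacle is bookkeeping rather than analysis. One has to make precise which mixed derivative "$\partial_N f$" denotes when $s>1$, and check that the $\mathrm{C}^N$ hypothesis guarantees existence and continuity of exactly the derivatives used in each Fubini/FTC step. I would handle this by performing the induction over scalar coordinates $l\in L$ with $s=1$ throughout, where $\partial_N f=\widetilde\partial_k f$ and continuity is part of the definition.
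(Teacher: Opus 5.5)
The paper omits the proof of this Fact, so there is nothing to compare against. The Fact is an $s=1$ statement: $a,b\in\mathbb{R}^N$, the inequalities $a_n\le b_n$, and the symbol $\partial_Nf$ (introduced in \cref{section4} only for $s=1$) all presuppose $V=\mathbb{R}$. For that case your argument is correct and is the natural one. It is the same induction on $r$ through \cref{prop:indIncrement} that the paper uses for \cref{prop:finIncrementNdim} and \cref{prop:measureOfNinterval}, with Fubini and the one-variable fundamental theorem of calculus in place of the mean value theorem.

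You should drop the extension to $s>1$, which does not work.
\begin{itemize}
\item In the paper $[x,y]$ is the \emph{segment} joining $x,y\in V$, not a box. So for $s>1$, $P(a,b)$ is a product of $r$ segments and is $\mu$-null in $\mathbb{R}^L$.
\item Also, $\widetilde\nabla_Nf(P(a,b))\in\mathbb{R}^{K^N}$ is then a vector, and $\square$ alternates only over the $2^r$ block vertices. Your boxes, full mixed derivative and $2^{rs}$-term increment are therefore not the paper's objects.
\item More seriously, your claim that $\partial_{(1,1)}\cdots\partial_{(r,s)}f$ is continuous ``by the $\mathrm{C}^N$ assumption'' is false. $\mathrm{C}^N$ controls only the $\widetilde\partial_kf$, which differentiate once in each block. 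For $r=1$, $s=2$, the function $f(x)=h(x_{1,1}+x_{1,2})$ with $h(t)=t|t|$ is $\mathrm{C}^N$ (i.e.\ $C^1$). Yet $\partial_{(1,1)}\partial_{(1,2)}f$ does not exist on the line $x_{1,1}+x_{1,2}=0$.
\end{itemize}
So being $\mathrm{C}^N$ does not give the $\mathrm{C}^L$ property your reduction to $|L|$ scalar blocks needs.

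A smaller point concerns the order of differentiation. By the paper's convention $\partial_Nf=\partial_1\cdots\partial_rf$ applies $\partial_r$ \emph{first}, and $\mathrm{C}^N$ guarantees only this order. Obtaining $\partial_Nf=\partial_r\partial_{N'}f$ therefore needs the Peano form of Schwarz's theorem: if $\partial_ig$ and $\partial_jg$ exist and $\partial_j\partial_ig$ is continuous, then $\partial_i\partial_jg$ exists and equals it. You would iterate this by induction on $|M|$. Your ``relabeling'' shortcut is not free, since reversing labels changes the order assumed in the hypothesis. There are two simpler fixes, neither needing any interchange of derivatives:
\begin{itemize}
\item Peel off $m=1$, which \cref{prop:indIncrement} allows and for which $\partial_Nf=\partial_1(\partial_{N\setminus\{1\}}f)$ holds by definition.
\item Keep $m=r$ but integrate in $x_r$ last, and apply the inductive hypothesis to the sections of $\partial_rf$. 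These are $\mathrm{C}^{N'}$ because $\partial_{M'}(\partial_rf)=\partial_{M'\cup\{r\}}f$ for $M'\subseteq N'$.
\end{itemize}
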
 


\section{Multidimensional MW-operators}\label{section7}
For any $x\in D$, we put
\begin{equation*}
\Gamma(x)=\big\{\gamma^i(x):i\in I\big\}\quad\text{and}\quad 
\Gamma_\pi(x)=\bigcup_{i\in I}\Pi(\gamma^i(0),\gamma^i(x)).
\end{equation*}

\begin{definition}[admissible]\label{defAmisibleSet}
A set $X\subseteq D$ is said to be \emph{admissible} if $0\in X$ and $\Pi(\gamma^i(0),\gamma^i(x))\subseteq X$ whenever $x\in X$ and $i\in I$. 	
\end{definition}

Put $\Gamma_\pi^0(0)=\{0\}$ and $\Gamma_\pi^{k+1}(0)=\Gamma_\pi(\Gamma_\pi^k(0))=\bigcup_{x\in \Gamma_\pi^k(0)}\Gamma_\pi(x)$ for every $k\in\mathbb{N}_0$. It is easy to see that the set~$\textbf{S}$ defined as follows 
\begin{equation*}
\textbf{S}=\bigcup_{k\in\mathbb{N}_0}\Gamma_\pi^k(0)
\end{equation*}
is the minimal (in the sense of inclusion) admissible set. If the family $\gamma$ consists of continuous functions, then it is easy to see that the minimal closed admissible set, is the closure of $\textbf{S}$; let us denote this set by $\textbf{T}$, i.e. 
\begin{equation*}
\textbf{T}=\overline{\textbf{S}}.	
\end{equation*}

Given an admissible set $X\subseteq D$, we put
\begin{equation*}
\mathcal{F}(X)=\big\{f\in\mathbb{R}^X:\text{$\sum_{i\in I}\square f\big(\gamma^i(0),\gamma^i(x)\big)$ converges for every $x\in X$}\big\}.
\end{equation*}

It is easy to see that $\mathcal{F}(X)$ is a vector subspace of $\mathbb{R}^X$.

\begin{definition}[multidimensional MW\nobreakdash-operator]\label{defMWO}
The linear operator $\mathbb{M}\colon \mathcal{F}(X)\to \mathbb{R}^X$ given by
\begin{equation*}\label{equ:MWoperator}
\mathbb{M}f(x)=\sum_{i\in I}\square f\big(\gamma^i(0),\gamma^i(x)\big)\quad\text{for every } x\in X 
\end{equation*} 
is said to be the \emph{multidimensional MW\nobreakdash-operator} (briefly, the \emph{MW-operator}). 
\end{definition}

The formula for the MW\nobreakdash-operator in \cref{defMWO} is given in terms of the multidimensional increment~$\square$. Next, we provide an equivalent formula for this operator; however, to do so, we need to introduce one additional notation.

For any $i\in I$, $x\in X$, and $M\subseteq N$, we define
\begin{equation}\label{gammaiM}
\gamma^i_M(x)=\pi_M\big(\gamma^i(0),\gamma^i(x)\big),
\end{equation}
i.e. $\gamma^i_M(x)=\big(\gamma^i_{M,n}(x)\big)_{n\in N}$, where 
\begin{equation*}
\gamma^i_{M,n}(x)=\pi_{M,n}\big(\gamma^i(0),\gamma^i(x)\big)=
\left\{
\begin{array}{cl}
\gamma^i_n(0),&\text{if }n\in M,\\
\gamma^i_n(x),&\text{if }n\in N\setminus M.
\end{array}
\right.
\end{equation*} 
Thus, using \eqref{equ:defIncrement}, we obtain the following equivalent formula for the MW-operator 
\begin{equation}\label{equ:MWoperatorDirect}
\mathbb{M}f(x)=\sum_{i\in I}\sum_{M\subseteq N}(-1)^{|M|}f\big(\gamma^i_M(x)\big).
\end{equation}

\begin{definition}[multidimensional MW\nobreakdash-functional equation]\label{defMWE}
The functional equation
\begin{equation*}\label{equ:MWE}
f=\mathbb{M}f  
\end{equation*}
is called the \emph{multidimensional MW\nobreakdash-functional equation} (briefly, the \emph{MW\nobreakdash-equation}). The unknown function $f$ is assumed to belong to $\mathcal{F}(X)$.
\end{definition}

To understand the concepts introduced and the notation adopted, we provide four examples.

\begin{example}\label{M11}
Assume that $r=s=1$, i.e. $D=V^N=V=\mathbb{R}$. Let $I=\{0,1\}$ and 
\begin{equation*}
\gamma^i(x)=\tfrac{x+i}{2}\quad \text{for all }i\in I\text{ and }x\in\mathbb{R}.
\end{equation*}
It is clear that the minimal admissible sets $\textbf{S}$ and $\textbf{T}$ are given by
\begin{equation*}
\textbf{S}=\big\{\tfrac{k}{2^n}:n\in\mathbb{N},k=0,1,\dots,2^n-1\big\}\quad\text{and}\quad 
\textbf{T}=[0,1].
\end{equation*}
By choosing the admissible set $X=\textbf{T}$, we obtain the MW\nobreakdash-equation~\eqref{MWeq} and the corresponding MW\nobreakdash-operator $\mathbb{M}$ given by~\eqref{MWop}.
\end{example}

The next two examples show that the two-dimensional MW\nobreakdash-operator may take different forms on the same admissible set, depending on the sets $N$ and $K$.

\begin{example}\label{M12}
Assume that $r=1$ and $s=2$, i.e. $D=V^N=V=\mathbb{R}^2$. Let $I=\{0,1\}^2$ and
\begin{equation*}\label{gamma}
\gamma^{(i,j)}(x,y)=\big(\tfrac{x+i}{2},\tfrac{y+j}{2}\big)\quad\text{for all }(i,j)\in I\text{ and }(x,y)\in\mathbb{R}^2.
\end{equation*}
It this case, the minimal admissible sets $\textbf{S}$ and $\textbf{T}$ are given by
\begin{equation*}\label{SgammaTgamma}
\textbf{S}=\big\{\tfrac{k}{2^n}:n\in\mathbb{N},k=0,1,\dots,2^n-1\big\}^2\quad\text{and}\quad 
\textbf{T}=[0,1]^2.
\end{equation*}
Choosing an admissible set $X$ and using \eqref{equ:MWoperatorDirect}, we conclude that the two-dimensional MW\nobreakdash-operator in the considered case takes the form
\begin{equation*}\label{equ:MWoperator12}
\mathbb{M}f(x,y)=\sum_{(i,j)\in I} \left[f\big(\tfrac{x+i}{2},\tfrac{y+j}{2}\big)-f\big(\tfrac{i}{2},\tfrac{j}{2}\big)\right]
\end{equation*}  
for every $(x,y)\in X$.
\end{example}

\begin{example}\label{M21}
Assume that $r=2$ and $s=1$, i.e. $V=\mathbb{R}$ and $D=V^N=\mathbb{R}^2$. Let $\gamma$ be the same as in \cref{M12}. Then the minimal admissible sets $\textbf{S}$ and $\textbf{T}$ are also the same as in \cref{M12}. Choosing an admissible set $X$ and again making use of~\eqref{equ:MWoperatorDirect}, we deduce that in this setting the two-dimensional MW\nobreakdash-operator takes the form
\begin{equation*}\label{equ:MWoperator21}
\mathbb{M}f(x,y)=\sum_{(i,j)\in I} \left[f\big(\tfrac{x+i}{2},\tfrac{y+j}{2}\big)-f\big(\tfrac{i}{2},\tfrac{y+j}{2}\big)
-f\big(\tfrac{x+i}{2},\tfrac{j}{2}\big)+f\big(\tfrac{i}{2},\tfrac{j}{2}\big)
\right]
\end{equation*}  
for every $(x,y)\in X$.
\end{example}

The fourth example generalizes all the preceding ones.

\begin{example}\label{Mrs}
Let $I=\{0,1\}^L$ and 
\begin{equation*}
\gamma^i(x)=\big(\tfrac{x_\ell+i_\ell}{2}\big)_{\ell\in L}
\quad\text{for all }i\in I\text{ and }x\in\mathbb{R}^L.
\end{equation*}
Now that the minimal admissible sets $\textbf{S}$ and $\textbf{T}$ are given by 
\begin{equation*}
\textbf{S}=\big\{\tfrac{k}{2^n}:n\in\mathbb{N},k=0,1,\dots,2^n-1\big\}^L\quad\text{and}\quad 
\textbf{T}=[0,1]^L.
\end{equation*}
Applying \eqref{equ:MWoperatorDirect} to an admissible set $X$, we arrive at the multidimensional MW\nobreakdash-operator in the form
\begin{equation*}
\mathbb{M}f(x)=\sum_{i\in I}\sum_{M\subseteq N}(-1)^{|M|}
f\Big(\big(\tfrac{i_\ell}{2}\big)_{\ell\in M\times K}\cup
\big(\tfrac{x_\ell+i_\ell}{2}\big)_{\ell\in (N\setminus M)\times K}\Big)
\end{equation*}
for every $x\in X$.
\end{example}

Our main result, formulated in \cref{section9} (see \cref{thm:MWequatoC1}), implies that a function $f\in C^r([0,1]^L,\mathbb{R})$ is a solution of the MW\nobreakdash-equation corresponding to the MW\nobreakdash-operator from \cref{Mrs} if and only if there exists $\lambda\in \mathbb{R}^{K^N}$ such that
\begin{equation*}
f(x)=\sum_{k\in K^N}\lambda_k\prod_{n\in N}x_{n,k_n}\quad\text{for every }x\in [0,1]^L.
\end{equation*}
In particular, for the cases listed in \cref{M11,M12,M21}, we obtain:
\begin{itemize}
\item[(i)] $f\in C^1([0,1],\mathbb R)$ is a solution of the MW\nobreakdash-equation \eqref{MWeq} (cf.~\cref{M11}) if and only if there exists $\lambda\in \mathbb{R}$ such that \begin{equation*}
f(x)=\lambda x\quad\text{for every }x\in[0,1],
\end{equation*}
\item[(ii)] $f\in C^1([0,1]^2,\mathbb R)$ is a solution of the MW\nobreakdash-equation corresponding to the MW\nobreakdash-operator from \cref{M12} if and only if there exist $\lambda_1,\lambda_2\in\mathbb R$ such that 
\begin{equation*}
f(x_1,x_2)=\lambda_1 x_1+\lambda_2 x_2\quad\text{for every }(x,y)\in[0,1]^2,
\end{equation*}
\item[(iii)] $f\in C^2([0,1]^2,\mathbb R)$ is a solution of the MW\nobreakdash-equation corresponding to the MW\nobreakdash-operator from \cref{M21} if and only if there exists $\lambda\in\mathbb R$ such that 
\begin{equation*}
f(x_1,x_2)=\lambda x_1x_2\quad\text{for every }(x,y)\in[0,1]^2.
\end{equation*}
\end{itemize} 

It is known that in (i) the class $C^1([0,1],\mathbb R)$ can be replaced by the class of absolutely continuous functions (see, e.g., \cite{R57,LP77}).


\section{Iterations of the MW-operators}\label{section8}
Throughout this section, we fix an admissible set $X\subseteq D\subseteq V^N$. 

For all $p\in\mathbb{N}$ and multi-indices $i=(i_1,\ldots,i_p)\in I^p$ we define $\gamma^i\colon D\to D$ by
\begin{equation*}
\gamma^i=\gamma^{i_1}\circ\gamma^{i_2}\circ\dots\circ \gamma^{i_p}.
\end{equation*}

As usual, for any $p\in\mathbb N$ we denote the $p$\nobreakdash-iterate of the MW\nobreakdash-operator $\mathbb{M}$ by $\mathbb{M}^p$. We define the domains of the iterates by recursion, setting $\mathcal{F}^1(X)=\mathcal{F}(X)$ and 
\begin{equation*}
\mathcal{F}^{p}(X)=\big\{f\in \mathcal{F}(X):\,\mathbb{M}^k f\in \mathcal{F}(X) \text{ for every }k\in \{1,\ldots,p-1\}\big\}
\end{equation*} 
for every $p\geq 2$. We also put
\begin{equation*}
\mathcal{F}^\infty(X)=\bigcap_{p=1}^\infty \mathcal{F}^p(X).
\end{equation*}
Thus, $\mathcal{F}^p(X)$ is the domain of $\mathbb M^p$, and $\mathcal{F}^\infty(X)$ is the common domain of all iterates. Obviously, $\mathcal{F}^\infty(X)$ is a vector subspace of $\mathbb{R}^X$, and $\mathcal{F}^\infty(X)=\mathcal{F}(X)=\mathbb{R}^X$ in the case where $I$ is a finite set.

\begin{proposition}\label{prop:IterationsMWoperatorsSpecial}
Assume that for any $i\in I$, the function $\gamma^i$ is of the form
\begin{equation*}
\gamma^i(x)=\big(\gamma^i_n(x_n)\big)_{n\in N}\quad\text{for every }x=(x_n)_{n\in N}\in D,
\end{equation*}	
where $\gamma^i_n\colon D_n\to D_n$ and $D_n=\{x_n\in V:x\in D\}$ for every $n\in N$.
Then for all $p\in\mathbb{N}$ and $f\in \mathcal{F}^p(X)$ we have 
\begin{equation*}\label{equ:mMWoperatorSpecial}
\mathbb{M}^p f(x)=\sum_{i\in I^p}\square f\big(\gamma^i(0),\gamma^i(x)\big),
\end{equation*}
or equivalently, extending the  definition of $\gamma^i_M$ given in \eqref{gammaiM} to all $i \in I^p$, $x \in X$, and $M \subseteq N$, 
\begin{equation}\label{equ:mMWoperatorSpecialDirect}
\mathbb{M}^p f(x)=\sum_{i\in I^p}\sum_{M\subseteq N}(-1)^{|M|}f\big(\gamma^i_M(x)\big).
\end{equation}
\end{proposition}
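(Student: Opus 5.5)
We argue by induction on $p$, and we work mostly with the direct formula \eqref{equ:mMWoperatorSpecialDirect}. The two displayed formulas are equivalent by \eqref{equ:defIncrement} once we check that $\gamma^i_M(x)=\pi_M(\gamma^i(0),\gamma^i(x))$ for multi-indices $i\in I^p$. For $p=1$ the claim is exactly \eqref{equ:MWoperatorDirect}.

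The key observation is a componentwise identity. Because every $\gamma^j$ acts coordinatewise, $\gamma^j(\pi_M(a,b))=\pi_M(\gamma^j(a),\gamma^j(b))$ for all $a,b\in D$ with $\Pi(a,b)\subseteq D$ and all $M\subseteq N$. Indeed, the $n$-th coordinate of either side is $\gamma^j_n(a_n)$ if $n\in M$ and $\gamma^j_n(b_n)$ otherwise. Iterating this, the same identity holds for the composite maps $\gamma^i$, $i\in I^p$.

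For the inductive step, fix $f\in\mathcal F^{p+1}(X)$ and write $\mathbb M^{p+1}f=\mathbb M(\mathbb M^pf)$. We apply \eqref{equ:MWoperatorDirect} to $g=\mathbb M^pf$:
\begin{equation*}
\mathbb M^{p+1}f(x)=\sum_{j\in I}\sum_{M\subseteq N}(-1)^{|M|}\,\mathbb M^pf\big(\gamma^j_M(x)\big).
\end{equation*}
Admissibility of $X$ ensures $\gamma^j_M(x)\in X$, so the inductive hypothesis applies at the point $y=\gamma^j_M(x)$. It gives $\mathbb M^pf(y)=\sum_{i\in I^p}\sum_{M'\subseteq N}(-1)^{|M'|}f(\pi_{M'}(\gamma^i(0),\gamma^i(y)))$.

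Now $\gamma^i(0)=\gamma^i(\gamma^j_{N}(x))$ only when the relevant coordinates agree, so we compute coordinatewise. For $n\in M$ we have $y_n=\gamma^j_n(0_n)$, hence $\gamma^i_n(y_n)=\gamma^{(i,j)}_n(0_n)$. For $n\notin M$ we have $\gamma^i_n(y_n)=\gamma^{(i,j)}_n(x_n)$. Here $(i,j)$ denotes the concatenation, and $\gamma^{(i,j)}=\gamma^i\circ\gamma^j$. The point $\pi_{M'}(\gamma^i(0),\gamma^i(y))$ therefore has $n$-th coordinate $\gamma^i_n(0_n)$ for $n\in M'$. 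For $n\notin M'$ the coordinate is $\gamma^{(i,j)}_n(0_n)$ if $n\in M$, and $\gamma^{(i,j)}_n(x_n)$ if $n\notin M$.

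The main obstacle is then a cancellation. For fixed $i,j$ we sum over pairs $(M,M')$ with sign $(-1)^{|M|+|M'|}$, and we need to show that only the terms with $M'=\emptyset$ survive. For $n\in M'$ the coordinate does not depend on whether $n\in M$. We therefore pair $M$ with $M\triangle\{n_0\}$, where $n_0$ is the smallest element of $M'$. These two terms have the same argument of $f$ and opposite signs, so every term with $M'\neq\emptyset$ cancels. This is a finite sum for fixed $i,j$, so the rearrangement is harmless.

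The surviving terms are $\sum_{M}(-1)^{|M|}f(\pi_M(\gamma^{(i,j)}(0),\gamma^{(i,j)}(x)))$. However, we must also justify interchanging $\sum_{j}$, $\sum_M$ and $\sum_{i\in I^p}$, and this is where infinite $I$ causes trouble. I would handle it by performing the cancellation inside the finite sums over $M,M'$ before touching the series. Each series over $i\in I^p$ converges by the assumption $f\in\mathcal F^{p+1}(X)$ together with the inductive hypothesis, and we sum finitely many convergent series termwise over $M$. The result is then an iterated sum over $j\in I$ and $i\in I^p$, interpreted as the sum over $I^{p+1}$ in the matching iterated order. The proposition's sum over $I^p$ is to be read in this iterated sense, and under that reading the identity follows.
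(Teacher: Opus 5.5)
Your proof is correct and follows the paper's argument: induction on $p$, a coordinatewise computation of the composite points, and the observation that every term whose outer-map subset index is nonempty cancels, leaving exactly the increments of $f$ along $\gamma^{(i,j)}=\gamma^i\circ\gamma^j$. The differences are cosmetic. You split $\mathbb{M}^{p+1}=\mathbb{M}\circ\mathbb{M}^p$ where the paper uses $\mathbb{M}^p\circ\mathbb{M}$, and you cancel via the involution $M\mapsto M\triangle\{n_0\}$ instead of the identity $\sum_{L'\subseteq M}(-1)^{|L'|}=0$. You are also more explicit than the paper in reading $\sum_{i\in I^p}$ as an iterated series when $I$ is infinite; both splittings produce the same nesting, with the outermost sum over the last index.
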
	

\begin{proof}
We proceed by induction on $p$ to prove \eqref{equ:mMWoperatorSpecialDirect}.
For $p=1$, \eqref{equ:mMWoperatorSpecialDirect} reduces to~\eqref{equ:MWoperatorDirect}. Fix $p\in\mathbb N$ and suppose that \eqref{equ:mMWoperatorSpecialDirect} holds for every $f\in \mathcal{F}^p(X)$.

Let us fix $f\in\mathcal{F}^{p+1}(X)$ and $x\in X$. Then $\mathbb{M}f\in\mathcal{F}^p(X)$. Putting 
\begin{equation*}
\Sigma_M^{i,j}(x)=\sum_{L\subseteq N}(-1)^{|L|}f\big(\gamma^i_M(\gamma^j_L(x))\big)
\end{equation*}
for all $i\in I$, $j\in I^p$ and $M\subseteq N$, we have
\begin{equation}\label{equ:MWspecial(p+1)}
\begin{split}
\mathbb{M}^{p+1} f(x)&=\mathbb{M}^p\big(\mathbb{M} f\big)(x)
=\sum_{j\in I^p}\sum_{L\subseteq N}(-1)^{|L|}\mathbb{M}f\big(\gamma^j_L(x)\big)\\
&=\sum_{j\in I^p}\sum_{L\subseteq N}(-1)^{|L|}
\sum_{i\in I}\sum_{M\subseteq N}(-1)^{|M|}f\big(\gamma^i_M(\gamma^j_L(x))\big)\\
&=\sum_{j\in I^p}\sum_{i\in I}\sum_{M\subseteq N}(-1)^{|M|}\Sigma_M^{i,j}(x).
\end{split}
\end{equation}

Let $M\subseteq N$ and $L=L'\cup L''$ with $L'\subseteq M$ and $L''\subseteq N\setminus M$. Then
\begin{equation}\label{equ:gammaKLij}
\begin{split}
\gamma^i_{M,n}(\gamma^j_L(x))
&=\begin{cases}
\gamma^i_n(0),&\text{if }n\in M,\\
\gamma^i_n\big(\gamma^j_{L,n}(x_n)\big),&\text{if }n\in  N\setminus M
\end{cases}\\
&=\begin{cases}
\gamma^i_n(0),&\text{if }n\in M,\\
\gamma^i_n\big(\gamma^j_n(0)\big),&\text{if }n\in L'',\\
\gamma^i_n\big(\gamma^j_n(x_n)\big),&\text{if }n\in (N\setminus M)\setminus L''.
\end{cases}
\end{split}
\end{equation}
Therefore, $\gamma^i_M\big(\gamma^j_L(x)\big)$ does not depend on~$L'$ and
$\gamma^i_M\big(\gamma^j_L(x)\big)=\gamma^i_M\big(\gamma^j_{L''}(x)\big)$.
In consequence, 
\begin{equation}\label{Sigma}
\begin{split}
\Sigma_M^{i,j}(x)&=\sum_{L'\subseteq M}(-1)^{|L'|}\sum_{L''\subseteq N\setminus M}(-1)^{|L''|}
f\big(\gamma^i_M(\gamma^j_{L'\cup L''}(x))\big)\\
&=\sum_{L'\subseteq M}(-1)^{|L'|}\sum_{L''\subseteq N\setminus M}
(-1)^{|L''|}f\big(\gamma^i_M(\gamma^j_{ L''}(x))\big).	
\end{split}		
\end{equation}

Observe now that
\begin{equation}\label{sumL'K}
\sum_{L'\subseteq M}(-1)^{|L'|}=
\begin{cases}
0,&\text{if }M\neq\emptyset,\\
1,&\text{if }M=\emptyset. 
\end{cases}
\end{equation}

For any $i\in I$ and $j=(j_k)_{k=1}^p\in I^p$ we put $i\smallsmile j=(i,j_1,\dots,j_p)\in I^{p+1}$.
Applying~\eqref{equ:gammaKLij} we obtain 
\begin{equation*}
\gamma^i_{\emptyset,n}(\gamma^j_L(x))
=\begin{cases}
\gamma^i_n\big(\gamma^j_n(0)\big),&\text{if }n\in L,\\
\gamma^i_n\big(\gamma^j_n(x_n)\big),&\text{if }n\in N\setminus L
\end{cases}
=\gamma^{i\smallsmile j}_{L,n}(x_n)
\end{equation*}
for every $n\in N$, and hence $\gamma^i_{\emptyset}(\gamma^j_L(x))=\gamma^{i\smallsmile j}_{L}(x)$.
This together with \eqref{sumL'K} lets us continue the computation started in \eqref{Sigma} to get
\begin{equation*}
\Sigma^{i,j}_M(x)=
\begin{cases}
0,&\text{if }M\ne\emptyset,\\ 			
\sum_{L\subseteq N}(-1)^{|L|}f\big(\gamma^{i\smallsmile j}_L(x)\big)&\text{if }M=\emptyset.
\end{cases}
\end{equation*}
Finally, returning to \eqref{equ:MWspecial(p+1)} and substituting $i\smallsmile j$ by $\ell$, we conclude that
\begin{align*}
\mathbb{M}^{p+1} f(x)&=\sum_{i\in I,j\in  I^{p}}\Sigma_\emptyset^{i,j}(x)
=\sum_{i\in I, j\in  I^{p}}\sum_{L\subseteq N}(-1)^{|L|}f\big(\gamma^{i\smallsmile j}_L(x)\big)\\
&=\sum_{\ell\in I^{p+1}}\sum_{L\subseteq N}(-1)^{|L|}f\big(\gamma^{\ell}_L(x)\big),
\end{align*}
and the proof is complete.
\end{proof}


\section{The case of affine functions}\label{section9}
From now on, we assume that $D=V^N$ and the following hypotheses:
\begin{enumerate}[label={\rm (H$_1$)}]
\item\label{H1} For every $i\in I$ there exist $\alpha^i=(\alpha^i_n)_{n\in N}\in(0,1)^N$ and $a^i=(a^i_n)_{n\in N}\in V^N$ such that $q=\sup_{i\in I,n\in N}\alpha^i_n<1$ and $\gamma^i$ is of the form
\begin{equation*}
\gamma^i(x)=\big(\gamma^i_n(x_n)\big)_{n\in N}=\big(\alpha^i_nx_n+a^i_n\big)_{n\in N}
\quad\text{for every }x=(x_n)_{n\in N}\in V^N.
\end{equation*}
\end{enumerate}
\begin{enumerate}[label={\rm (H$_2$)}]
\item\label{H2} There exists a compact admissible set $T\subseteq V^N$ with $\mu(T)>0$ such that
\begin{equation*}
T\meq{\bigsqcup_{i\in I}}^\mu \gamma^i(T)\quad\text{and}\quad
T=\overline{\bigcup_{i\in I}\gamma^i(T)}.
\end{equation*}
\end{enumerate}

For any $i\in I$, we put
\begin{equation*}
\beta^i=\prod\limits_{n\in N}\alpha^i_n
\end{equation*}
and observe that $\mu(\gamma^i(T))=\beta^i\mu(T)$. Then, by \ref{H2}, we obtain
\begin{equation*}
\mu(T)=\mu\left({\bigsqcup\limits_{i\in I}}^\mu \gamma^i(T)\right)
=\sum_{i\in I}\mu(\gamma^i(T))=\sum_{i\in I}\beta^i\mu(T)=\mu(T)\sum_{i\in I}\beta^i.
\end{equation*}
Thus,
\begin{equation}\label{equ:Acond5}
\sum_{i\in I}\beta^i=1.
\end{equation}

Given a function  $g\colon D\to\mathbb{R}^N$ and $\delta>0$, we put
\begin{equation*}
\omega_T(g,\delta)=\sup\left\{\|g(x)-g(y)\|:\,
x\in T,y\in D\text{ such that  }\|x-y\|<\delta\right\}.
\end{equation*}
This notion is closely related to the modulus of continuity. It is not difficult to show that if $g$ is continuous at each point of the compact set~$T$ then $\omega_T(g,\delta)\to 0$ as $\delta\to 0$.

Given a $\mathrm{C}^N$\nobreakdash-function $\widetilde f\colon V^N\to \mathbb{R}$, we define the operator $\mathbb L$ by setting
\begin{equation*}
\mathbb{L}\widetilde{f}(x)=\overline{\nabla}\!_N\widetilde f(T)x^N
\quad\text{for every }x\in V^N.
\end{equation*}

We are now ready to state the main result of this paper.

\begin{theorem}\label{thm:MWequatoC1}
Assume \ref{H1} and \ref{H2}.
Let $X\subseteq V^N$ be an admissible set, $f\colon X\to \mathbb{R}$ be a $\mathrm{C}^N$\nobreakdash-function, and $\widetilde f\colon V^N\to \mathbb{R}$ be one of its $\mathrm{C}^N$\nobreakdash-extensions. Then: 
\begin{enumerate}[label=$(\roman*)$]
\item\label{mainI} $T$ is the minimal closed admissible set, i.e. $T=\mathbf{T}$; 
\item\label{mainII} $\mathcal{F}^\infty(X)$ contains every function that is $N$\nobreakdash-dimensional Lipschitz on every compact subset of~$X$, in particular $f\in \mathcal{F}^\infty(X)$;
\item\label{mainIII} $\lim_{p\to\infty}\mathbb{M}^pf(x)= \mathbb{L}\widetilde{f} (x)$ for every $x\in X$, and moreover, if $X$ is compact, then the convergence is uniform;
\item\label{mainIV} $\mathbb{M}f=f$ if and only if there exists $\lambda\in\mathbb{R}^{K^N}$ such that  
\begin{equation*}
f(x)=\lambda x^N=\sum_{k\in K^N}\lambda_k\odot^N_k(x)\quad\text{for every }x\in X,
\end{equation*}
 i.e.\ $f$ is an $r$\nobreakdash-linear functional.  
\end{enumerate}
\end{theorem}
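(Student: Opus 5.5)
We treat the four items in order; the convergence in \ref{mainIII} is the heart of the argument, and \ref{mainII} and \ref{mainIV} follow from it together with the mean value theorem (\cref{prop:finIncrementNdim}).

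For \ref{mainI}, note that under \ref{H1} the maps $\gamma^i$ are continuous, so $\mathbf{T}=\overline{\mathbf{S}}$ is the minimal closed admissible set. Since $T$ is closed and admissible, $\mathbf{T}\subseteq T$. For the reverse inclusion, we use the second condition in \ref{H2}: $T$ is the closure of $\bigcup_i\gamma^i(T)$. Iterating this gives that $T$ is the closure of $\bigcup_{i\in I^p}\gamma^i(T)$ for every $p$. Each set $\gamma^i(T)$ with $i\in I^p$ has diameter at most $q^p\operatorname{diam}T$, and it contains the point $\gamma^i(0)$, which lies in $\mathbf{S}$ because $0\in\mathbf{S}$ and $\mathbf{S}$ is admissible. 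Hence every point of $T$ lies within $q^p\operatorname{diam}T+\varepsilon$ of $\mathbf{S}$ for all $p$, so $T\subseteq\overline{\mathbf S}$.

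For \ref{mainII}, let $g$ be $N$\nobreakdash-dimensional Lipschitz on compact subsets of $X$ with constant $C_Y$ on $Y$. By \ref{H1}, $\gamma^i(x)-\gamma^i(0)=(\alpha^i_nx_n)_n$, so
\[
\bigl|\square g(\gamma^i(0),\gamma^i(x))\bigr|\le C_Y\beta^i\|x\|^N .
\]
This requires all points $\gamma^i(x)$, for $x$ in a compact $Y\subseteq X$, to lie in a fixed compact subset of $X$. I would take $Y'=\overline{\bigcup_{p}\bigcup_{i\in I^p}\Pi(\gamma^i(0),\gamma^i(Y))}\cap X$, which is bounded since the $\gamma^i$ are uniform contractions with $\gamma^i(0)\in T$, and which lies in $X$ by admissibility. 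The closure is the delicate point: if $X$ is not closed, one may have to work only with the $\Pi$-points themselves. Then $\sum_i\beta^i=1$ by \eqref{equ:Acond5} gives absolute convergence, so $g\in\mathcal F(X)$, and $|\mathbb Mg(x)|\le C\|x\|^N$. To iterate, I would show that $\mathbb Mg$ is again $N$\nobreakdash-dimensional Lipschitz. By \cref{prop:IterationsMWoperatorsSpecial}, $\mathbb M^pg(x)=\sum_{i\in I^p}\square g(\gamma^i(0),\gamma^i(x))$. The multilinear structure of $\square$ under the coordinatewise affine maps suggests that $\square(\mathbb M^p g)(x,y)=\sum_{i\in I^p}\square g(\gamma^i(x),\gamma^i(y))$; this follows from the same cancellation as in the proof of \cref{prop:IterationsMWoperatorsSpecial}. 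It yields the bound $C\|x-y\|^N\sum_{i\in I^p}\beta^i=C\|x-y\|^N$, using $\sum_{i\in I^p}\beta^i=1$. The particular case $f$ then follows from \cref{prop:NdimLipschitzOfCNfunction} applied to the continuous $\nabla_N\widetilde f$ on compact boxes.

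For \ref{mainIII}, apply \cref{prop:finIncrementNdim} to each term. For every $i\in I^p$ there is $c^i\in P(\gamma^i(0),\gamma^i(x))$ such that
\[
\square\widetilde f(\gamma^i(0),\gamma^i(x))=\nabla_N\widetilde f(c^i)\bigl(\alpha^i x\bigr)^N=\beta^i\,\nabla_N\widetilde f(c^i)x^N,
\]
by multilinearity, with $\alpha^i=\prod_{t}\alpha^{i_t}$ taken coordinatewise. Hence $\mathbb M^p f(x)=\sum_{i\in I^p}\beta^i\nabla_N\widetilde f(c^i)x^N$. On the other hand, by \ref{H2} iterated, $\mu(T)=\sum_{i\in I^p}\mu(\gamma^i(T))$ with almost disjoint pieces, so
\[
\mathbb L\widetilde f(x)=\sum_{i\in I^p}\beta^i\,\frac{1}{\mu(\gamma^i(T))}\int_{\gamma^i(T)}\nabla_N\widetilde f\,d\mu\;x^N .
\]
For $x$ in a compact set $Y$, both $c^i$ and the set $\gamma^i(T)$ lie within distance $q^p(\operatorname{diam}T+\sup_Y\|x\|)$ of $\gamma^i(0)\in T$. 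Using the uniform continuity of $\nabla_N\widetilde f$ near $T$ (through $\omega_T$) and \eqref{equ:CBSinequality}, the difference is at most $\omega_T(\nabla_N\widetilde f,\delta_p)\|x\|^N$ with $\delta_p\to0$. This gives uniform convergence on compact sets. Making this estimate uniform over the countably many $i$ is the step I expect to require the most care.

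For \ref{mainIV}, if $\mathbb Mf=f$ then $f=\mathbb M^pf\to\mathbb L\widetilde f$, which is $\lambda x^N$ with $\lambda=\overline\nabla_N\widetilde f(T)$. Conversely, for $f=\lambda x^N$ we have
\[
\square f(\gamma^i(0),\gamma^i(x))=\lambda(\alpha^ix)^N=\beta^i\lambda x^N ,
\]
since $\square$ of an $r$-linear form equals the form evaluated at the difference. Summing and using \eqref{equ:Acond5} gives $\mathbb Mf=f$.
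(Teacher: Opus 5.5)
Your proposal is correct and follows the paper's proof in all four parts. The matching steps are: the diameter argument with $\gamma^i(0)\in\mathbf{S}$ for (i); the bound $C\|x\|^N\sum_{i}\beta^i$ for (ii); the mean value theorem on each $P(\gamma^i(0),\gamma^i(x))$, together with $\mathbb{L}\widetilde f(x)=\sum_{i\in I^p}\beta^i\,\overline{\nabla}\!_N\widetilde f(\gamma^i(T))x^N$ and an $\omega_T$ estimate, for (iii); and the limit argument plus the identity $\square f(a,b)=\lambda(b-a)^N$ for $f(x)=\lambda x^N$ for (iv). The only local differences are minor. You average directly over $\gamma^i(T)$ instead of choosing $t^i\in B^i$ by the intermediate value theorem. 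Your identity $\square(\mathbb{M}g)(x,y)=\sum_{i\in I}\square g(\gamma^i(x),\gamma^i(y))$ spells out the iteration step that the paper compresses into a reference to \cref{prop:IterationsMWoperatorsSpecial}.

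Neither point you flag is a real obstacle. The $\omega_T$ bound does not depend on $i$, so summing it against $\sum_{i\in I^p}\beta^i=1$ gives uniformity for free. On closedness, the paper likewise takes $X$ closed implicitly: it uses that $X\cap P(a,b)$ is compact, and $\mathrm{C}^N$\nobreakdash-functions are only defined on closed sets. In fact, for non-closed $X$ the general claim in (ii) is false, so no ``$\Pi$-points only'' repair exists. Take $r=s=1$, $\gamma^i(x)=1-2^{-i}+2^{-i-1}x$ for $i\in\mathbb{N}_0$, $T=[0,1]$, $X=[0,1)$ and $g(x)=1/(1-x)$; then $\mathbb{M}g(x)=\sum_i 2^i x/(2-x)$ diverges. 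The particular function $f$ is still covered for any admissible $X$, since $\widetilde f$ is $N$\nobreakdash-dimensional Lipschitz on every compact box.
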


\begin{proof}
We start with some notations and obvious observations. 

Fix $p\in\mathbb{N}$ and let $\delta=\text{diam}\,T$. In contrast to the setting described at the beginning of this section, we consider multi-indices $i=(i_k)_{k=1}^p\in I^p$. Then, for any $n\in N$, ${\gamma^i_n=\gamma^{i_1}_n\circ\gamma^{i_2}_n\circ\dots\circ\gamma^{i_p}_n}$ is an affine function such that $\gamma^i_n(x)=\alpha^i_nx+a^i_n$ for every $x\in V$, where $a^i_n=\gamma^i_n(0)$ and $\alpha^i_n=\prod_{k=1}^p\alpha^{i_k}_n$. 
For the empty multi-index $\varnothing\in I^0$, we put $\alpha^\varnothing_n=1$, $a^\varnothing_n=0$, and $\gamma^\varnothing_n(x)=x$ for every $x\in V$. 

For any $i=(i_k)_{k=1}^p\in I^p$ we set $\gamma^i=(\gamma^i_n)_{n\in N}$, $T^i=\gamma^i(T)$, $a^i=(a^i_n)_{n\in N}$, and $\beta^i=\prod_{n\in N}\alpha^i_n$. Therefore, $\mu(T^i)=\mu(T)\beta^i$ and $\beta^i=\prod_{k=1}^p\beta^{i_k}$. So, using~\eqref{equ:Acond5}, we have
\begin{equation}\label{equ:Acondition8}
\sum_{i\in I^p}\beta^i=\prod_{k=1}^p\sum_{i_k\in I}\beta^{i_k}=1.
\end{equation}

Proceeding by induction and using \ref{H2}, we conclude that
\begin{equation}\label{equ:Acond4p}
T\meq{\bigsqcup_{i\in I^p}}^\mu T^i\quad\text{and}\quad 
T=\overline{\bigcup_{i\in I^p}T^i}.
\end{equation}

Now observe that \ref{H1} implies $0\leq\sup_{i\in I^p,\,n\in N}\alpha^i_n\le q^p$.
In particular, $\gamma^i$ is a $q^p$\nobreakdash-Lipschitz function. Then $\mathrm{diam}\,T^i\le q^p\mathrm{diam}\,T=q^p\delta$, and hence
\begin{equation}\label{equ:Acondition10}
\sup_{i\in I^p}\mathrm{diam}\,T^i \le q^p\delta.
\end{equation}

We begin with proving \cref{mainI}.

Note that $a^i=\gamma^i(0)\in\gamma^i(T)=T^i$ and $a^i\in \Gamma^p_\pi(0)\subseteq \mathbf{S}$ for any $p\in\mathbb{N}_0$ and $i\in I^p$ (recall that $a^{\varnothing}_n=0$ for $p=0$). Therefore, \eqref{equ:Acond4p} and \eqref{equ:Acondition10} imply
\begin{equation*}\label{equ:Acondition11}
\textstyle T=\overline{\left\{a^i:i\in \bigcup_{p=0}^{\infty}I^p\right\}}
\subseteq\overline{\mathbf{S}}=\mathbf{T}.
\end{equation*}
	
Now, we pass to the proof of~\ref{mainII}.
	
Let $g\colon X\to\mathbb{R}$ be $N$\nobreakdash-dimensional Lipschitz on every compact subset of $X$. Fix $x\in X$. Pick some $a=(a_\ell)_{\ell\in L}\in V^N$ and $b=(b_\ell)_{\ell\in L}\in V^N$ such that $T\cup(T+x)\subseteq P(a,b)$ and $a_\ell< b_\ell$ for every $\ell\in L$. Set $X_0=X\cap P(a,b)$. Since $X_0$ is compact, we conclude that $g$ is $N$\nobreakdash-dimensional Lipschitz on~$X_0$ with a constant $C\ge 0$. 

Observe that for all $i\in I^p$ and	$n\in N$, we have
\begin{equation*}
\gamma^i(0)=a^i\in T\quad\text{and}\quad \gamma^i_n(x_n)=a^i_n+\alpha^i_nx_n\in[a^i_n,a^i_n+x_n],
\end{equation*}
because $0<\alpha^i_n<1$. In particular, $\gamma^i(0),\gamma^i(x)\in T\cup(T+x)\subseteq P(a,b)$. Since $X$ is admissible, the set $\Pi\big(\gamma^i(0),\gamma^i(x)\big)$ is included in~$X$, and then also contained in $X_0$. Therefore, using \eqref{equ:Acondition8}, we obtain
\begin{align*}
\sum_{i\in I^p}\left|\square g\big(\gamma^i(0),\gamma^i(x)\big)\right|
&\le  \sum_{i\in I^p} C\prod_{n\in N}\big\|\gamma^i_n(x_n)-\gamma^i_n(0)\big\|\\
=C\sum_{i\in I^p}\prod_{n\in N}\alpha^i_n\,\|x_n\|
&=C\sum_{i\in I^p}\beta^i\|x\|^{N} =C\|x\|^{N}<\infty.
\end{align*}
Taking now into account \cref{prop:IterationsMWoperatorsSpecial}, we see that $g\in\mathcal{F}^\infty(X)$. Moreover, by \Cref{prop:NdimLipschitzOfCNfunction}, we conclude that $\widetilde{f}$ is $N$\nobreakdash-dimensional Lipschitz on the compact set $P(a,b)$. In consequence, $\widetilde{f}\in \mathcal{F}^\infty(V^N)$, and hence $f\in \mathcal{F}^\infty(X)$. 
	
Next, we prove~\ref{mainIII}. 

Fix $x\in X$ and $p\in \mathbb{N}$. Recall that $\delta=\text{diam}\,T$ and let $r_x=\max\{\delta,\|x\|\}$. It is enough to show that 
\begin{equation}\label{equ:estimationMp_minus_L}
|\mathbb{M}^pf(x)-\mathbb{L}\widetilde{f}(x)|\le2\|x\|^N\omega_T\big(\nabla_N\widetilde{f},r_xq^p\big).
\end{equation}
	
Using the definition of the average $N$\nobreakdash-gradient and \ref{H2} we conclude that 
\begin{align*}
\mathbb{L}\widetilde{f}(x)&=\overline{\nabla}\!_{N}\widetilde{f}(T) x^N
=\tfrac1{\mu(T)}\widetilde{\nabla}\!_{N}\widetilde{f}(T) x^N
=\tfrac1{\mu(T)}\int_T\nabla\!_N\widetilde{f}(t)x^N\,dt\\
&=\tfrac1{\mu(T)}\sum_{i\in I^p}\int_{T^i}\nabla\!_N\widetilde{f}(t)x^N\,dt
=\sum_{i\in I^p}\tfrac{\mu(T^i)}{\mu(T)}\overline{\nabla}\!_N\widetilde{f}(T^i)x^N\\
&=\sum_{i\in I^p}\beta^i\overline{\nabla}\!_N\widetilde{f}(T^i)x^N.
\end{align*}

Fix $i\in I^p$ and denote by $B^i=B[a^i,\delta q^p]$ the closed ball with centre~$a^i$ and radius $\delta q^p$. Since $a^i\in T^i$ and ${\text{diam}\,T^i\le\delta q^p}$ established in~\eqref{equ:Acondition10}, we see that $B^i$ is a connected superset of $T^i$. Put $J^i=\overline{\nabla}\!_N\widetilde{f}(T^i)x^N$.
Since 
\begin{equation*}
\min_{t\in B^i}\nabla\!_N\widetilde{f}(t) x^N\le \min_{t\in T^i}\nabla\!_N\widetilde{f}(t) x^N \le J^i\le \max_{t\in T^i}\nabla\!_N\widetilde{f}(t) x^N
\le \max_{t\in B^i}\nabla\!_N\widetilde{f}(t) x^N
\end{equation*} 
and the function $t\mapsto\nabla\!_N\widetilde{f}(t)x^N$ is continuous (as $\widetilde{f}$ is a $\mathrm{C}^N$\nobreakdash-function), we conclude that there exists $t^i\in B^i$ such that $J^i=\nabla\!_N\widetilde{f}(t^i)x^N$. Therefore,
\begin{equation}\label{equ:Acondition12}
\mathbb{L}\widetilde{f}(x)=\sum_{i\in I^p}\beta^i \nabla\!_N\widetilde{f}(t^i)x^N.
\end{equation} 
	
By \Cref{prop:IterationsMWoperatorsSpecial,prop:finIncrementNdim}, for some $s^i\in P\big(\gamma^i(0),\gamma^i(x)\big)=\prod_{n\in N}[a^i_n,a^i_n+\alpha^i_nx_n]$ we have
\begin{equation}\label{equ:Acondition13}
\begin{split}
\mathbb{M}^pf(x)&=\sum_{i\in I^p}\square \widetilde{f}\big(\gamma^i(0),\gamma^i(x)\big)
=\sum_{i\in I^p} \nabla\!_N\widetilde{f}(s^i) \big(\gamma^i(x)-\gamma^i(0)\big)^N\\
&= \sum_{i\in I^p}\sum_{k\in K^N}\widetilde{\partial}_k\widetilde{f}(s^i)\odot^N_k(\alpha^i_nx)
=\sum_{i\in I^p}\sum_{k\in K^N}\widetilde{\partial}_k\widetilde{f}(s^i)\prod_{n\in N}\alpha^i_nx_{n,k_n}\\
&=\sum_{i\in I^p}\prod_{n\in N}\alpha^i_n\sum_{k\in K^N}\widetilde{\partial}_k\widetilde{f}(s^i)\prod_{n\in N}x_{n,k_n}
= \sum_{i\in I^p} \beta^i\nabla\!_N\widetilde{f}(s^i) x^N.
\end{split}
\end{equation}
Recalling that $t^i\in B^i$, we have
\begin{equation*}
\|t^i-a^i\|\le \delta q^p\le r_xq^p.
\end{equation*}
Since $s^i,a^i\in P\big(\gamma^i(0),\gamma^i(x)\big)$ and $\gamma^i$ is $q^p$\nobreakdash-Lipschitz, we conclude that  
\begin{equation*}
\|s^i-a^i\|\le\mathrm{diam}\,P\big(\gamma^i(0),\gamma^i(x)\big)
\leq q^p\mathrm{diam} (P(0,x))\le \|x\| q^p\le r_xq^p.
\end{equation*}	

Finally, taking into account \eqref{equ:Acondition13}, \eqref{equ:Acondition12},  \eqref{equ:CBSinequality}, and \eqref{equ:Acondition8}, we obtain
\begin{align*}
|\mathbb{M}^pf(x)-\mathbb{L}\widetilde{f}(x)|&=
\left|\sum_{i\in I^p}\left(\beta^i \nabla\!_N\widetilde{f}(s^i) x^N
-\beta^i \nabla\!_N\widetilde{f}(t^i) x^N\right)\right|\\
&=\left|\sum_{i\in I^p}\beta^i\left(\nabla\!_N\widetilde{f}(s^i)
-\nabla\!_N\widetilde{f}(t^i)\right) x^N\right|\\
&\le \sum_{i\in I^p}\beta^i\left\|\nabla\!_N\widetilde{f}(s^i)
-\nabla\!_N\widetilde{f}(t^i)\right\|\|x\|^N\\
&\le \sum_{i\in I^p}\beta^i\left(\left\|\nabla\!_N\widetilde{f}(s^i)
-\nabla\!_N\widetilde{f}(a^i)\right\|+\left\|\nabla\!_N\widetilde{f}(a^i)- \nabla\!_N\widetilde{f}(t^i)\right\|\right)\|x\|^N\\
&\le \sum_{i\in I^p}\beta^i\,2\omega_T(\nabla_N\widetilde{f},r_xq^p)\|x\|^N
=2\|x\|^N\omega_T(\nabla_N\widetilde{f},r_xq^p),
\end{align*}
which proves \eqref{equ:estimationMp_minus_L}, and so \ref{mainIII} holds.

It remains to prove \ref{mainIV}. 

Let $f\colon X\to \mathbb{R}$ be a $\mathrm{C}^N$\nobreakdash-function such that  $f=\mathbb{M}f$ and $\widetilde{f}\colon V^N\to \mathbb{R}$ be a $\mathrm{C}^N$\nobreakdash-extension of $f$. Then $f(x)=\mathbb{M}^pf(x)$ for all $x\in X$ and $p\in\mathbb{N}$.
Put $\lambda=\overline{\nabla}\!_N\widetilde f(T)$. By~\ref{mainIII}, we conclude that
\begin{equation*}
f(x)=\lim_{p\to\infty}\mathbb{M}^p f(x)=\mathbb{L}\widetilde{f}(x)=\lambda x^N
\end{equation*}
for any $x\in X$. Thus $f$ is of the desired form.
	
Suppose now that there exists $\lambda\in\mathbb R^{K^N}$ such that $f(x)=\lambda x^N$ for any $x\in V^N$. Fix some $x\in V^N$. Then
\begin{equation*}
f(x)=\sum_{k\in K^N}\lambda_k\prod_{n\in N}x_{n,k_n}.
\end{equation*}  
Fix $i\in I$ and $M\subseteq N$. Put $x^i_M=\pi_{M}\big(\gamma^i(0),\gamma^i(x)\big)$ and $b^i_n=\alpha^i_nx_n+a^i_n$ for every $n\in N$. Therefore, $x^i_M=(x^i_{M,n})_{n\in N}$, where $x^i_{M,n}=a^i_n$ if $n\in M$ and $x^i_{M,n}=b^i_n$ if $n\in N\setminus M$. Then
\begin{equation*}
f\Big(\pi_M\big(\gamma^i(0),\gamma^i(x)\big)\Big)
=\sum_{k\in K^N}\lambda_k\prod_{n\in M}a^i_{n,k_n}\prod_{n\in N\setminus M}b^i_{n,k_n}, 
\end{equation*}
and hence
\begin{align*}
\square f\big(\gamma^i(0),\gamma^i(x)\big)&=\sum_{M\subseteq N}(-1)^{|M|}
f\left(\pi_M\big(\gamma^i(0),\gamma^i(x)\big)\right)\\
&=\sum_{M\subseteq N}(-1)^{|M|}
\sum_{k\in K^N}\lambda_k\prod_{n\in M}a^i_{n,k_n}\prod_{n\in N\setminus M}b^i_{n,k_n} \\
&=\sum_{k\in K^N}\lambda_k\sum_{M\subseteq N}(-1)^{|M|}
\prod_{n\in M}a^i_{n,k_n}\prod_{n\in N\setminus M}b^i_{n,k_n} \\
&=\sum_{k\in K^N}\lambda_k\prod_{n\in N}(b^i_{n,k_n}-a^i_{n,k_n})
=\sum_{k\in K^N}\lambda_k\prod_{n\in N}\alpha^i_{n}x^i_{n,k_n}\\
&=\beta^i\sum_{k\in K^N}\lambda_k\prod_{n\in N}x^i_{n,k_n}
=\beta^i\lambda x^N.
\end{align*}
Finally, since $\sum\limits_{i\in I}\beta^i=1$, we obtain
\begin{equation*}
\mathbb{M}f(x)=\sum_{i\in I}\square f\big(\gamma^i(0),\gamma^i(x)\big)
=\sum_{i\in I}\beta^i\lambda x^N=\lambda x^N=f(x),
\end{equation*}
and the proof is complete.
\end{proof}

In the case where $s=1$ assertion \ref{mainIV} of \cref{thm:MWequatoC1} takes the form
\begin{enumerate}
\item[(\textit{iv'})] $\mathbb{M}f=f$ if and only if there exists $\lambda\in\mathbb{R}$ such that  $f(x)=\lambda x^N=\lambda\prod\limits_{n\in N}x_n$ for every $x\in X$.
\end{enumerate}


\section{Multivariate distribution of a Borel measure}\label{section10}
To the end of this paper, we assume that $s=1$, i.e. $V=\mathbb R$.

For all $x,y\in V$ we put $[x,y)=[x,y]\setminus\{y\}$, and for all $a=(a_n)_{n\in N},b=(b_n)_{n\in N}\in [0,\infty)^N$ we define $Q(a,b)=\prod_{n\in N}[a_n,b_n)$.
Given $a,b\in [0,\infty)^N$ we write $a\preceq b$ if $a\in P(0,b)$, and 
$a\prec b$ if $a\in Q(0,b)$. Obviously, for all $a,b\in [0,\infty)^N$ with $a\preceq b$, we have
\begin{equation*}
P(a,b)=\big\{x\in [0,\infty)^N:a\preceq x\preceq b\big\}\quad\text{and}\quad
Q(a,b)=\big\{x\in [0,\infty)^N:a\preceq x\prec b\big\}.
\end{equation*} 

To the end of this paper, we assume the following hypothesis.
\begin{enumerate}[label={\rm (H$_3$)}]
  \item\label{H3} Assume that $T\subseteq [0,\infty)^N$ is compact and 
$Q(0,x)\subseteq T$ for every $x\in T$.
\end{enumerate}
Set 
\begin{equation*}
\mathcal{S}(T)=\big\{Q(a,b):a,b\in T\text{ with }a\preceq b\big\}.
\end{equation*}
It is easy to check that $\mathcal{S}(T)$ is a semi-ring of sets and $\mathcal{S}(T)$  generates $\mathcal{B}(T)$.

Let $\nu$ be a Borel measure on $T$. The \textit{multivariate distribution} of  $\nu$ is said to be the function $d_\nu\colon T\to\mathbb{R}$ defined by 
\begin{equation*}
d_\nu(x)=\nu(Q(0,x)).
\end{equation*}
We say that a Borel measure $\nu$ on $T$ is a \textit{$\mathrm{C}^N$-measure} if its multivariate distribution $d_\nu$ is a $\mathrm{C}^N$-function. Clearly, the multivariate distribution of the Lebesgue measure $\mu$ on $T$ is of the form
\begin{equation}\label{equ:distribution_of_Lebesgue_measure}
d_\mu(x)=x^N=\prod_{n\in N}x_n\quad\text{for every }x\in T;
\end{equation}
note that $T\subseteq[0,\infty)^N$ is crucial here. Consequently, $d_\mu$ is a $\mathrm{C}^N$-measure on $T$.

\begin{proposition}\label{prop:measureOfNinterval}
Assume~\cref{H3}, that $\nu$ is a Borel measure on $T$, and let $a,b\in T$ with $a\preceq b$. Then
$\nu(Q(a,b))=\square d_\nu(a,b)$.
\end{proposition}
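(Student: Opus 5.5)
The plan is to prove $\nu(Q(a,b))=\square d_\nu(a,b)$ by induction on $r=|N|$, using \cref{prop:indIncrement} to peel off one coordinate at a time. As an alternative, a direct inclusion–exclusion argument is also available. Throughout, $s=1$, so $V=\mathbb R$.

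First I check that everything is well defined. For $a\preceq b$ with $a,b\in T$, every point $\pi_M(a,b)$ of $\Pi(a,b)$ satisfies $\pi_M(a,b)\preceq b$. Hence
\[
\pi_M(a,b)\in P(0,b)\subseteq \overline{Q(0,b)}.
\]
Since $Q(0,b)\subseteq T$ by \ref{H3} and $T$ is closed, we get $\pi_M(a,b)\in T$. So $d_\nu$ is defined on $\Pi(a,b)$, and $\square d_\nu(a,b)$ makes sense. Also $Q(a,b)\subseteq Q(0,b)\subseteq T$, so $\nu(Q(a,b))$ is defined. I will take $\nu$ finite on these sets; I expect the paper intends $\nu$ to be finite, since $T$ is compact.

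The direct argument goes as follows. For $n\in N$, put
\[
A_n=\{x\in Q(0,b): x_n<a_n\}.
\]
Then $Q(a,b)=Q(0,b)\setminus\bigcup_{n\in N}A_n$. For $M\subseteq N$, the intersection $\bigcap_{n\in M}A_n$ is exactly the box
\[
\prod_{n\in M}[0,a_n)\times\prod_{n\in N\setminus M}[0,b_n)=Q\big(0,\pi_M(a,b)\big).
\]
Here I use $a_n\le b_n$, so that $[0,a_n)\cap[0,b_n)=[0,a_n)$. The inclusion–exclusion principle for the finite measure $\nu$ then gives
\[
\nu(Q(a,b))=\sum_{M\subseteq N}(-1)^{|M|}\nu\big(Q(0,\pi_M(a,b))\big)=\sum_{M\subseteq N}(-1)^{|M|}d_\nu\big(\pi_M(a,b)\big).
\]
The $M=\emptyset$ term is $\nu(Q(0,b))$. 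By \eqref{equ:defIncrement}, the right-hand side is exactly $\square d_\nu(a,b)$.

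If instead one prefers to mirror \cref{prop:indIncrement}, the induction runs as follows. Fix $m=r$ and split
\[
Q(a,b)=Q(a',b')\times_m[a_m,b_m)=\big(Q(a',b')\times_m[0,b_m)\big)\setminus\big(Q(a',b')\times_m[0,a_m)\big),
\]
where $a'=a|_{N\setminus\{m\}}$ and $b'=b|_{N\setminus\{m\}}$. Then apply the inductive hypothesis to the measures $\nu_c(B)=\nu(B\times_m[0,c))$ on the lower-dimensional set, for $c=a_m$ and $c=b_m$. Their distributions are $(d_\nu)_{c:m}$, and the conclusion follows from \cref{prop:indIncrement}.

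The only delicate point is verifying that the intersections of the $A_n$ are precisely the half-open boxes $Q(0,\pi_M(a,b))$. This is where the half-open convention $[x,y)$ and the order $a\preceq b$ matter. The rest is the standard inclusion–exclusion identity.
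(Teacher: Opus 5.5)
Your inclusion--exclusion argument is correct, and it takes a different route from the paper's. The paper argues by induction on $r$. It splits off the last coordinate and writes $Q(a,b)$ as the difference of $[0,b_r)\times_r Q(a',b')$ and $[0,a_r)\times_r Q(a',b')$. It then introduces $\nu_1(E)=\nu([0,a_r)\times_r E)$ and $\nu_2(E)=\nu([0,b_r)\times_r E)$ on the section $T'=\{x':(b_r,x')_r\in T\}$, checks that $T'$ again satisfies \ref{H3}, and finishes with \cref{prop:indIncrement}. This is exactly your secondary sketch.

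Your main argument instead matches the alternating sum \eqref{equ:defIncrement} term by term through the single identity $\bigcap_{n\in M}A_n=Q(0,\pi_M(a,b))$. It is shorter and non-inductive. It also avoids the auxiliary measures and the check that $T'$ inherits \ref{H3}, which is the step of the inductive route needing the most care: the points $(b_r,y')_r$ never lie in the half-open box $Q(0,(b_r,x')_r)$, so one has to use that $T$ is closed. The inductive route, for its part, runs parallel to the proof of \cref{prop:finIncrementNdim} and reuses \cref{prop:indIncrement}, which keeps the paper's treatment of $\square$ uniform.

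Two small remarks. First, you need not assume $\nu$ is finite: $d_\nu$ is real-valued and $b\in T$, so $\nu(Q(0,b))<\infty$, and every set in your inclusion--exclusion is contained in $Q(0,b)$.

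Second, your inclusion $P(0,b)\subseteq\overline{Q(0,b)}$ fails when some $b_n=0$, since then $Q(0,b)=\emptyset$. In that case $\Pi(a,b)$ need not lie in $T$. For example, with $r=3$ the set $T=\{(0,1,1),(0,\tfrac12,\tfrac12)\}$ satisfies \ref{H3} vacuously, yet $(0,\tfrac12,1)\notin T$. So the statement tacitly presupposes $\Pi(a,b)\subseteq T$. When that holds, $a_n=b_n=0$ makes every box $Q(0,\pi_M(a,b))$ empty, both sides vanish, and your argument applies verbatim.
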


\begin{proof}
In the case $r=1$, the condition $a\preceq b$ means that $a=tb$ for some $t\in[0,1]$. Then $Q(a,b)=[a,b)=[0,b)\setminus [0,a)=Q(0,b)\setminus Q(0,a)$, and hence 
\begin{equation*}
\nu(Q(a,b))=\nu(Q(0,b))-\nu(Q(0,a))=d_\nu(b)-d_\nu(a)=\square d_\nu(a,b).
\end{equation*}

Suppose now that for every $p\in\{1,\ldots,r-1\}$ the claim of the proposition holds for the set $N_p=\{1,\ldots,p\}\subseteq N$. Fix $a,b\in T$ such that $a\preceq b$. Put $N'=N_{r-1}$, $a'=a|_{N'}$, $b'=b|_{N'}$, and $T'=\{x'\in V^{N'}:(b_r,x')_r\in T\}$. Then $0\le a_r\le b_r$ and $a'\preceq b'$. Moreover, if $x'\in T'$ and $y'\in P(0,x')$, then $(b_r,x')_r\in T$ and \cref{H3} implies that $(b_r,y')_r\in Q(0,(b_r,x')_r)\subseteq T$. Hence, $y'\in T'$, and thus $T'$ satisfies the same properties as~$T$ as far as \cref{H3} is concerned (and $N'$ instead of~$N$).

Since $b'\in T'$, we have $Q(0,a')\subseteq Q(0,b')\subseteq T'$, and hence
\begin{align*}
Q(a,b)&=[a_r,b_r)\times_r Q(a',b')=\big([0,b_r)\setminus[0,a_r)\big)\times_r Q(a',b')\\	 
&=\big([0,b_r)\times_r Q(a',b')\big)\setminus\big([0,a_r)\times_r Q(a',b')\big).
\end{align*}

Defining two Borel measures $\nu_1$ and $\nu_2$ on~$T'$ by
$\nu_1(E)=\nu([0,a_r)\times_r E)$ and $\nu_2(E)=\nu([0,b_r)\times_r E\big)$, we obtain
\begin{equation}\label{equ:measureOfNinterval1}
\nu(Q(a,b))=\nu_2(Q(a',b'))-\nu_1(Q(a',b')).
\end{equation}
Then 
\begin{equation*}
d_{\nu_1}(x')=\nu_1(Q(0,x'))=\nu([0,a_r)\times_r Q(0,x'))
=\nu\big(Q(0,(a_r,x')_r)\big)=d_\nu((a_r,x')_r).
\end{equation*}
for any $x'\in T'$, and hence $d_{\nu_1}=(d_{\nu})_{a_r{:}r}$. In the same way we obtain $d_{\nu_2}=(d_{\nu})_{b_r{:}r}$. 

Finally, using \eqref{equ:measureOfNinterval1}, the inductive hypothesis and \Cref{prop:indIncrement}, we conclude that 
\begin{align*}
\nu(Q(a,b))&=\nu_2(Q(a',b'))-\nu_1(Q(a',b'))
=\square d_{\nu_2} (a',b')-\square d_{\nu_1} (a',b')\\
&=\square (d_{\nu})_{b_r{:}r}(a',b')-\square (d_{\nu})_{a_r{:}r}(a',b')
=\square d_{\nu}(a,b),
\end{align*}
and the proof is complete.
\end{proof}

\begin{proposition}\label{prop:equality_of_measure_with_the_same_distr}
Assume~\cref{H3} and that $\nu_1,\nu_2$ are Borel measures on $T$ such that $d_{\nu_1}(x)=d_{\nu_2}(x)$ for every $x\in T$. Then $\nu_1=\nu_2$.
\end{proposition}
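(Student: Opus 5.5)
The plan is to combine \cref{prop:measureOfNinterval} with a uniqueness-of-extension argument from the semi-ring $\mathcal{S}(T)$ to $\mathcal{B}(T)$.

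First, take any $Q(a,b)\in\mathcal{S}(T)$, so $a,b\in T$ and $a\preceq b$. By \cref{prop:measureOfNinterval}, applied to each measure,
\begin{equation*}
\nu_1(Q(a,b))=\square d_{\nu_1}(a,b)=\square d_{\nu_2}(a,b)=\nu_2(Q(a,b)).
\end{equation*}
The middle equality holds because $\square$ only evaluates the distribution at points of $\Pi(a,b)$. These points are coordinatewise either $a_n$ or $b_n$, so each of them lies in $P(0,b)$. By \cref{H3}, which I will need slightly strengthened to the closed box, such points lie in $T$, where the two distributions agree. Hence $\nu_1=\nu_2$ on $\mathcal{S}(T)$.

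Next, I need $\nu_1$ and $\nu_2$ to be finite, which is what the uniqueness theorem requires. Since $T$ is compact in $[0,\infty)^N$, I want a point $c\in T$ with $T\subseteq Q(0,c)$, so that $\nu_j(T)=d_{\nu_j}(c)<\infty$. This is where the main obstacle lies. The set $T$ is only known to contain $Q(0,x)$ for each $x\in T$, and it may have no "top corner" point. Moreover, "Borel measure" might not a priori mean finite.

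My first attempt would be to cover $T$. Every $x\in T$ satisfies $x\in Q(0,x')$ only if $x'\succ x$ lies in $T$, which can fail on the upper boundary. So instead I would observe that the points of $T$ not covered by $\bigcup_{x\in T}Q(0,x)$ form a set that I handle separately. Such sets, like faces of the form $\{x:x_n=\max\}$, are themselves intersections of decreasing sequences of elements of $\mathcal{S}(T)$, or else are limits of $Q(a,b^{(k)})$ with $b^{(k)}\to b$.

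In practice I would argue as follows. Finiteness holds because $\nu_j(Q(0,x))=d_{\nu_j}(x)$ is a real number for each $x$, and by compactness finitely many such boxes, together with boundary pieces, cover $T$. Alternatively, I would simply interpret Borel measures on compact $T$ as finite, as is standard in this setting.

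Finally, with $\nu_1$ and $\nu_2$ finite and agreeing on the semi-ring $\mathcal{S}(T)$, I would handle the generated $\sigma$-algebra. Since $\mathcal{S}(T)$ generates $\mathcal{B}(T)$, as stated before \cref{prop:measureOfNinterval}, I would invoke Carathéodory's uniqueness theorem. Equivalently, I would use Dynkin's $\pi$–$\lambda$ theorem, noting that $\mathcal{S}(T)$ is closed under finite intersections; this requires that $T$ be a countable union of sets of $\mathcal{S}(T)$, or that $\nu_1(T)=\nu_2(T)$. If the covering of $T$ by $\mathcal{S}(T)$ fails at the boundary, I would pass to the ring generated by $\mathcal{S}(T)$ and use monotone limits of sets in it to reach $T$. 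This yields $\nu_1=\nu_2$ on $\mathcal{B}(T)$.
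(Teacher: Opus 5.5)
Your first step matches the paper's: $\nu_1$ and $\nu_2$ agree on $\mathcal{S}(T)$ by \cref{prop:measureOfNinterval}. Your worry about closed boxes is harmless. If all $b_n>0$, then $P(0,b)=\overline{Q(0,b)}\subseteq T$ because $T$ is closed; if some $b_n=0$, then $Q(a,b)=\emptyset$.

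\textbf{The gap.} The genuine gap is the passage from $\mathcal{S}(T)$ to $\mathcal{B}(T)$, and it cannot be closed. Every member of $\mathcal{S}(T)$ lies in $U=\bigcup_{x\in T}Q(0,x)$. Hence every set built from $\mathcal{S}(T)$ by countable unions, intersections, differences or monotone limits also stays inside $U$. So your claim that the uncovered faces are decreasing intersections of members of $\mathcal{S}(T)$, or limits of $Q(a,b^{(k)})$, is false; an increasing sequence $b^{(k)}\to b$ just gives back $Q(a,b)$. The set $T\setminus U$ can only be reached by complementation in $T$. That requires $\nu_1(T)=\nu_2(T)<\infty$, which does not follow from $d_{\nu_1}=d_{\nu_2}$, and assuming the measures finite does not help either.

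\textbf{Counterexample.} Take $r=1$, $T=[0,1]$, $\nu_1=\delta_1$ and $\nu_2=0$. Then \cref{H3} holds, and $d_{\nu_1}(x)=\delta_1([0,x))=0=d_{\nu_2}(x)$ for every $x\in[0,1]$, yet $\nu_1\neq\nu_2$. So the statement as written is false, and no argument along these lines can prove it.

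\textbf{The paper's proof.} It has exactly the same hole, hidden in the assertion that $\{E\in\mathcal{B}(T):\nu_1(E)=\nu_2(E)\}$ is a $\sigma$-field. In the example this class contains $[0,1)$ but not its complement $\{1\}$. In general it is only a Dynkin system, and only when $\nu_1(T)=\nu_2(T)<\infty$.

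\textbf{What is provable.} The statement holds under either of two extra hypotheses.
\begin{enumerate}[label=(\alph*)]
\item Assume $\nu_1(T)=\nu_2(T)<\infty$. Then the $\pi$--$\lambda$ theorem applies, since the semi-ring $\mathcal{S}(T)$ is a $\pi$-system generating $\mathcal{B}(T)$.
\item Assume $\nu_1(T\setminus U)=\nu_2(T\setminus U)=0$. Then $U=\bigcup_k Q(0,x_k)$ for a countable dense set $\{x_k\}\subseteq T$, with $\nu_j(Q(0,x_k))=d_{\nu_j}(x_k)<\infty$. Carath\'eodory's uniqueness theorem then applies on $U$.
\end{enumerate}

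\textbf{Downstream effect.} Since $d_{\delta_1}\equiv 0$, $\delta_1$ is a $\mathrm{C}^N$-measure. So the same example also defeats the absolute-continuity conclusion of \cref{prop:absolutely_continuity_of_CN_measures}. It also defeats the implication (ii)$\Rightarrow$(iii) of \cref{thm:IM} for the dyadic maps on $[0,1]$.
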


\begin{proof}
Fix $a,b\in T$ with $a\preceq b$.
By \Cref{prop:measureOfNinterval}, we have
\begin{equation*}
\nu_1(Q(a,b))=\square d_{\nu_1}(a,b)=\square d_{\nu_2}(a,b)=\nu_2(Q(a,b)),
\end{equation*}
which shows that $\nu_1$ and $\nu_2$ coincide on $\mathcal{S}(T)$. Since the family $\{E\in\mathcal{B}(T):\nu_1(E)=\nu_2(E)\}$ is a $\sigma$-field containing $\mathcal{S}(T)$, we see that it equals $\mathcal{B}(T)$. Therefore, $\nu_1=\nu_2$.
\end{proof}

\begin{proposition}\label{prop:absolutely_continuity_of_CN_measures}
Assume~\cref{H3} and let $\nu$ be a $\mathrm{C}^N$\nobreakdash-measure on $T$ and such that $C=\sup_{x\in T} |\partial_N d_\nu(x)|$. Then $\nu(P)\le C\mu(P)$ for every $P\in\mathcal{S}(T)$.
In particular, $\nu$ is $\mu$-absolutely continuous.
\end{proposition}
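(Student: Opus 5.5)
Fix $P=Q(a,b)\in\mathcal{S}(T)$ with $a,b\in T$ and $a\preceq b$. We combine \cref{prop:measureOfNinterval} with the mean value theorem \cref{prop:finIncrementNdim}. By \cref{prop:measureOfNinterval}, $\nu(P)=\square d_\nu(a,b)$. Let $\widetilde d$ be a $\mathrm{C}^N$\nobreakdash-extension of $d_\nu$ to $V^N$; increments of $d_\nu$ and $\widetilde d$ coincide on $\Pi(a,b)$. This needs $\Pi(a,b)\subseteq T$, which follows from \cref{H3}: each $\pi_M(a,b)\preceq b$, so it lies in $Q(0,b')\subseteq T$ for a slightly larger $b'$. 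Alternatively, work directly with the extension, noting that $\square\widetilde d(a,b)=\square d_\nu(a,b)$ because all corner points are in $T$ by the definition of $d_\nu$ on $T$.

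Since $s=1$, \cref{prop:finIncrementNdim} gives $c\in P(a,b)$ with
\begin{equation*}
\square \widetilde d(a,b)=\partial_N\widetilde d(c)\prod_{n\in N}(b_n-a_n).
\end{equation*}
Because $a_n\le b_n$, the product equals $\mu(Q(a,b))=\mu(P)$. Hence
\begin{equation*}
\nu(P)\le|\partial_N\widetilde d(c)|\,\mu(P).
\end{equation*}

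The main obstacle is that $c\in P(a,b)$ must lie in $T$ for the bound by $C$ to apply, and the supremum $C$ is taken only over $T$ (implicitly for the extension's derivative). We handle this in two ways:
\begin{itemize}
\item If $c\in T$, we are done immediately.
\item Otherwise $c$ may lie on the closed upper faces, and we approximate. Pick $b^{(j)}\uparrow b$ strictly in each coordinate with $a\preceq b^{(j)}$. Then $P(a,b^{(j)})\subseteq Q(0,b)\subseteq T$ by \cref{H3}. Also $Q(a,b^{(j)})\uparrow Q(a,b)$, so continuity from below of $\nu$ and $\mu$ gives the inequality in the limit.
\end{itemize}
The approximation requires $b^{(j)}\in T$, which holds since $b^{(j)}\in Q(0,b)\subseteq T$. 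One should also handle degenerate cases where some $a_n=b_n$ (then $P=\emptyset$, trivial) or $b_n=0$. If $C=\infty$ the statement is vacuous, so assume $C<\infty$.

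For absolute continuity, we extend the inequality from the semi-ring to all Borel sets. Given a Borel $E\subseteq T$ with $\mu(E)=0$ and $\varepsilon>0$, use outer regularity of Lebesgue measure via the semi-ring. Since $\mathcal{S}(T)$ generates $\mathcal{B}(T)$, the Carathéodory outer measure built from $\mu$ on $\mathcal{S}(T)$ equals $\mu$. So $E$ is covered by countably many $P_j\in\mathcal{S}(T)$ with $\sum\mu(P_j)<\varepsilon$, and then
\begin{equation*}
\nu(E)\le\sum_j\nu(P_j)\le C\varepsilon.
\end{equation*}
Equivalently, the measures $\nu$ and $C\mu$ satisfy $\nu\le C\mu$ on the semi-ring, and hence on its generated $\sigma$\nobreakdash-field by uniqueness of Carathéodory extension and monotonicity of outer measures. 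Therefore $\nu(E)=0$.
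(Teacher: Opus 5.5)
Your proof of the inequality $\nu(P)\le C\mu(P)$ on $\mathcal{S}(T)$ is correct, but the approximation $b^{(j)}\uparrow b$ is more than you need. When $a_n<b_n$ for all $n$, compactness of $T$ gives $P(a,b)=\overline{Q(a,b)}\subseteq\overline{Q(0,b)}\subseteq T$. So the corners $\Pi(a,b)$ and the mean-value point $c$ lie in $T$ automatically; this is the paper's one-line argument. Your first reason for $\Pi(a,b)\subseteq T$ (``$Q(0,b')\subseteq T$ for a slightly larger $b'$'') is wrong. (H$_3$) applies only to $b'\in T$, and for $T=[0,1]^N$, $b=(1,\dots,1)$ no such $b'$ exists; closedness of $T$ is the correct reason. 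Also, $C<\infty$ holds automatically because $\partial_N\widetilde d$ is continuous on the compact set $T$, and you need this finiteness in the second part.

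The genuine gap is the absolute continuity step. That $\mathcal{S}(T)$ generates $\mathcal{B}(T)$ does not make the Carath\'eodory outer measure of $\mu|_{\mathcal{S}(T)}$ equal to $\mu$. Nor does it let $\nu\le C\mu$ pass from $\mathcal{S}(T)$ to $\sigma(\mathcal{S}(T))$. For that you need $T$ to be covered by countably many members of $\mathcal{S}(T)$. But $T_0:=\bigcup\mathcal{S}(T)=\bigcup_{b\in T}Q(0,b)$ omits every $x\in T$ with no point of $T$ strictly above it in all coordinates; for $T=[0,1]^N$ these are all $x$ with some $x_n=1$. A $\mu$-null set meeting $T\setminus T_0$ has no cover by members of $\mathcal{S}(T)$ at all.

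This cannot be patched, because the ``in particular'' is false as stated. Take $r=1$, $T=[0,1]$, $\nu=\delta_1$. Then $d_\nu(x)=\delta_1([0,x))=0$ for every $x\in T$, so $\nu$ is a $\mathrm{C}^N$-measure with $C=0$ and the inequality holds on $\mathcal{S}(T)$. Yet $\nu(\{1\})=1$ while $\mu(\{1\})=0$. The paper's proof stops after the inequality and never argues absolute continuity, so it has the same hole.

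What your covering argument does prove is $\nu(E)\le C\mu(E)$ for Borel $E\subseteq T_0$. This uses two facts: a box intersected with $Q(0,b')$, $b'\in T$, lies in $\mathcal{S}(T)$; and $T_0$ is a countable union of such $Q(0,b')$ with rational $b'$. A density-point argument gives $\mu(T\setminus T_0)=0$. Hence $\nu\ll\mu$ holds exactly when $\nu(T\setminus T_0)=0$, which is an extra hypothesis that $d_\nu$ cannot detect.
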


\begin{proof}
Fix distinct points $a,b\in T$ with $a\preceq b$ and put $P=Q(a,b)$. By \cref{H3}, we have $P\subseteq T$, and hence $\overline{P}=P(a,b)\subseteq \overline{T}=T$. Applying \Cref{prop:measureOfNinterval,prop:finIncrementNdim}, we deduce that there exists $c\in \overline{P}\subseteq T$ such that
\begin{equation*}
\nu(P)=\square d_\nu(a,b)=\nabla\!_Nd_\nu(c)(b-a)^N=\partial_Nd_\nu(c)(b-a)^N,
\end{equation*}
and since $\mu(P)=(b-a)^N$, we obtain
\begin{equation*}
\nu(P)=\partial_Nd_\nu(c)(b-a)^N= \partial_Nd_\nu(c)\mu(P)\le C\mu(P),
\end{equation*}
which completes the proof.
\end{proof}


\section{Invariant measures under some class of piecewise affine functions}\label{section11}
Recall that we assumed  $s=1$, i.e.\ $V=\mathbb R$, and $D=V^N$. We also assume the hypotheses~\cref{H1,H2,H3}.

Recall that a Borel measure $\nu$ on $T$ is said to be an \textit{invariant measure} under a Borel function $g\colon T\to T$ (or briefly \textit{$g$-invariant}) if
\begin{equation*}
\nu(E)=\nu(g^{-1}(E))\quad\text{for every }E\in\mathcal{B}(T).
\end{equation*}

Put
\begin{equation*}\label{equ:definition_of_the_family_Ai}
A=\bigcup_{i,j\in I}(\gamma^i)^{-1}(\gamma^i(T)\cap \gamma^j(T))\quad\text{and}\quad
A^i=\gamma^i(A)\quad\text{for every }i\in I.
\end{equation*}
By \ref{H2}, we have
\begin{equation}\label{equ:r_disjointness_of_gamma1}
T\mathop{=}^\mu A\mathop{=}^\mu \bigsqcup_{i\in I} A^i,
\end{equation}
and we define the Borel function $g_\gamma\colon T\to T$ by
\begin{equation}\label{equ:definition_of_g_gamma}
g_\gamma(x)=\begin{cases}
(\gamma^i)^{-1}(x),&\text{if }x\in A^i\text{ for some }i\in I,\\
0,&\text{if }x\in T\setminus\bigsqcup_{i\in I}A^i.
\end{cases}
\end{equation}

\begin{theorem}\label{thm:IM} 
Assume \ref{H1}, \ref{H2}, and \ref{H3}. Let $\nu$ be a $\mathrm{C}^N$-measure on $T$. Then the following conditions are equivalent:
\begin{enumerate}[label=$(\roman*)$]
\item\label{equ:IM(i)} $\nu$ is $g_\gamma$-invariant; 
\item\label{equ:IM(ii)}  $\mathbb{M}d_\nu=d_\nu$;
\item\label{equ:IM(iii)} there exists $\lambda\ge 0$ such that $\nu=\lambda\mu$.
\end{enumerate}
\end{theorem}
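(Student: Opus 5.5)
I will prove the cycle of implications (i)$\Rightarrow$(ii)$\Rightarrow$(iii)$\Rightarrow$(i).

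\emph{Step (i)$\Rightarrow$(ii).} Fix $x\in T$. The key identity is
\begin{equation*}
g_\gamma^{-1}(Q(0,x))\meq\bigsqcup_{i\in I}\gamma^i(Q(0,x))
\end{equation*}
up to a $\mu$-null set. This follows from \eqref{equ:r_disjointness_of_gamma1} and the definition \eqref{equ:definition_of_g_gamma}: on $A^i$ the map $g_\gamma$ is $(\gamma^i)^{-1}$. The exceptional set $T\setminus\bigsqcup A^i$ and the overlaps are $\mu$-null.

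Since $\nu$ is a $\mathrm{C}^N$-measure, \cref{prop:absolutely_continuity_of_CN_measures} shows $\nu\ll\mu$, so these null sets are also $\nu$-null. The set $\gamma^i(Q(0,x))$ is a half-open box $Q(\gamma^i(0),\gamma^i(x))$, because $\gamma^i$ is coordinatewise increasing affine with $\alpha^i_n>0$. I also need $\gamma^i(0)\preceq\gamma^i(x)$ and both points in $T$. Here I use admissibility of $T$ from \ref{H2} together with $a^i=\gamma^i(0)\ge0$; the latter follows since $a^i\in T\subseteq[0,\infty)^N$.

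By invariance and \cref{prop:measureOfNinterval},
\begin{equation*}
d_\nu(x)=\nu(g_\gamma^{-1}(Q(0,x)))=\sum_{i\in I}\nu\big(Q(\gamma^i(0),\gamma^i(x))\big)=\sum_{i\in I}\square d_\nu(\gamma^i(0),\gamma^i(x))=\mathbb{M}d_\nu(x).
\end{equation*}
The series converges since all terms are nonnegative and $\nu$ is finite; this also places $d_\nu$ in $\mathcal{F}(T)$, consistent with \cref{thm:MWequatoC1}\ref{mainII}.

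\emph{Step (ii)$\Rightarrow$(iii).} Apply \cref{thm:MWequatoC1}\ref{mainIV} in the form (\textit{iv'}) with $X=T$, which is admissible. This gives $d_\nu(x)=\lambda\prod_n x_n=\lambda d_\mu(x)$ on $T$, using \eqref{equ:distribution_of_Lebesgue_measure}.

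Since $\mu(T)>0$ and $d_\mu(x)>0$ for some $x\in T$ (a box of positive measure lies in $T$ by \ref{H3}), nonnegativity of $\nu$ forces $\lambda\ge0$. Then \cref{prop:equality_of_measure_with_the_same_distr} yields $\nu=\lambda\mu$.

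\emph{Step (iii)$\Rightarrow$(i).} It suffices to show that $\mu$ (restricted to $T$) is $g_\gamma$-invariant. By \cref{prop:equality_of_measure_with_the_same_distr}-type uniqueness it is enough to check agreement of $\mu$ and $\mu\circ g_\gamma^{-1}$ on the semiring $\mathcal{S}(T)$, or directly on all Borel $E$. For $E\in\mathcal B(T)$,
\begin{equation*}
g_\gamma^{-1}(E)\meq\bigsqcup_i\gamma^i(E),
\end{equation*}
again up to $\mu$-null sets. Hence
\begin{equation*}
\mu(g_\gamma^{-1}(E))=\sum_i\beta^i\mu(E)=\mu(E)
\end{equation*}
by \eqref{equ:Acond5}.

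The main obstacle is the careful verification of the set identity $g_\gamma^{-1}(E)\meq\bigsqcup_i\gamma^i(E)$. This needs two facts:
\begin{enumerate}[label=(\alph*)]
\item $(\gamma^i)^{-1}(E)\cap A^i$ corresponds to $\gamma^i(E)\cap A^i$, and $\gamma^i(E)\setminus A^i$ is $\mu$-null;
\item the preimage of the exceptional value $0$ contributes only a null set.
\end{enumerate}
For (a), I use $\mu(T\setminus A)=0$ and the fact that $\gamma^i$ maps null sets to null sets. For (b), if $0\in E$ then $T\setminus\bigsqcup A^i$ is added, but it is $\mu$-null. Absolute continuity of $\nu$ transfers all of this to $\nu$.
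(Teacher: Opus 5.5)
Relative to the paper's earlier results, your proof is correct. Your steps (i)$\Rightarrow$(ii) and (ii)$\Rightarrow$(iii) are the paper's own. The first decomposes $g_\gamma^{-1}(Q(0,x))$ up to $\mu$-null sets and uses absolute continuity together with \cref{prop:measureOfNinterval}. The second uses (iv') followed by \cref{prop:equality_of_measure_with_the_same_distr}.

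The genuine difference is how you close the loop. The paper first establishes $d_{\nu\circ g_\gamma^{-1}}=\mathbb{M}d_\nu$ for every $\mathrm{C}^N$-measure and reads this identity in both directions to get (i)$\Leftrightarrow$(ii); the reverse direction goes through the uniqueness proposition. It then obtains (iii)$\Rightarrow$(ii) from the `if' half of \cref{thm:MWequatoC1}\ref{mainIV}, namely $\square f(\gamma^i(0),\gamma^i(x))=\beta^i\lambda x^N$ for $f=\lambda x^N$.

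You instead prove (iii)$\Rightarrow$(i) directly via $\mu(g_\gamma^{-1}(E))=\sum_{i\in I}\mu(\gamma^i(E))=\sum_{i\in I}\beta^i\mu(E)=\mu(E)$. This is the measure-level form of the same computation. It is shorter, handles all Borel $E$ at once, and needs neither the uniqueness proposition nor the `if' half of (iv).

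What the paper's arrangement buys is that (i)$\Leftrightarrow$(ii) uses only absolute continuity of $\nu$, with the limit theorem for $\mathbb{M}^p$ confined to (ii)$\Rightarrow$(iii); this is what motivates the closing Problem. In your cycle, (ii)$\Rightarrow$(i) is routed through that limit theorem and so depends on the $\mathrm{C}^N$ hypothesis. One small slip: in (a) you mean $g_\gamma^{-1}(E)\cap A^i=\gamma^i(E)\cap A^i$, not $(\gamma^i)^{-1}(E)\cap A^i$.

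One caveat applies equally to your argument and to the paper's. \Cref{prop:equality_of_measure_with_the_same_distr}, and the `in particular' of \cref{prop:absolutely_continuity_of_CN_measures}, fail for measures that charge the part of $T$ not covered by the half-open boxes $Q(0,x)$, $x\in T$. Concretely, take the dyadic maps $\gamma^0(x)=\tfrac{x}{2}$, $\gamma^1(x)=\tfrac{x+1}{2}$ on $T=[0,1]$, which satisfy \ref{H1}--\ref{H3}. The Dirac measure $\delta_1$ has $d_{\delta_1}\equiv0$, since $1\notin[0,x)$ for $x\in T$. So $\delta_1$ is a $\mathrm{C}^N$-measure satisfying (ii) but not (iii).

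Your (ii)$\Rightarrow$(iii) fails at exactly the same appeal to uniqueness as the paper's. With an added hypothesis such as $\nu\ll\mu$, both proofs go through, because the uncovered part of $T$ meets each line parallel to $(1,\dots,1)$ at most once and is therefore $\mu$-null. Your direct (iii)$\Rightarrow$(i) is unaffected in any case.
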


\begin{proof}  
Let us begin with some observations and notations. Observe first that by \ref{H1} for any $i\in I$ the function  $\gamma^i\colon V^N\to V^N$ is a bijection and
\begin{equation}\label{equ:image_of_inerval_by_gamma_i}
\gamma^i(Q(a,b))=Q(\gamma^i(a),\gamma^i(b))\quad\text{for every }a,b\in V^N.
\end{equation} 

Define the Borel measure~$\nu_\gamma$ on~$T$ by 
\begin{equation*}
\nu_\gamma(E)=\nu(g_\gamma^{-1}(E)).
\end{equation*}
Let $f=d_\nu$ and $f_\gamma=d_{\nu_\gamma}$. Fix $E\in \mathcal B(T)$. Put $E^i=\gamma^i(E)$ for every $i\in I$. Set $B=T\setminus\bigsqcup_{i\in I}A^i$. According to \eqref{equ:r_disjointness_of_gamma1}, we have $\mu(B)=0$ and $\mu(\gamma^i(T)\setminus A^i)=0$ for every $i\in I$. Put $B_0=B$ if $0\in E$ and $B_0=\emptyset$ if $0\notin E$. Then $g_\gamma^{-1}(E)\cap B=B_0$, $g_\gamma^{-1}(E)\cap A^i=E^i\cap A^i$ for all $i\in I$, and
\begin{equation*}
g_\gamma^{-1}(E)=(g_\gamma^{-1}(E)\cap B)
\sqcup\bigsqcup_{i\in I}(g_\gamma^{-1}(E)\cap A^i)
=B_0\sqcup\bigsqcup_{i\in I}(E^i\cap A^i).
\end{equation*}

By \Cref{prop:absolutely_continuity_of_CN_measures}, $\nu$ is $\mu$-absolutely continuous. Thus, $\nu(B_0)=0$ and $\nu(E^i\cap A^i)=\nu(E^i\cap \gamma^i(T))=\nu(E^i)$ for any $i\in I$, and hence
\begin{equation*}
\nu_\gamma(E)=\nu(g_\gamma^{-1}(E))=\nu(B_0)+\sum_{i\in I}\nu(\gamma^i(E)\cap A^i)
=\sum_{i\in I}\nu(E^i).
\end{equation*}
Therefore,
\begin{equation}\label{equ:preimage_of_Borel_by_g_gamma}
\nu_\gamma(E)=\sum_{i\in I}\nu(\gamma^i(E))\quad\text{for every }E\in\mathcal{B}(T).
\end{equation}

Fix $x\in T$. Put $Q=Q(0,x)$ and $Q^i=Q(\gamma^i(0),\gamma^i(x))$ for every $i\in I$. 
Then \eqref{equ:image_of_inerval_by_gamma_i} implies that $Q^i=\gamma^i(Q)$.
Further, we conclude by~\cref{H3} that $Q\subseteq T$.
Using now \eqref{equ:preimage_of_Borel_by_g_gamma} with $E=Q$ we obtain
\begin{equation}\label{f}
f_\gamma(x)=\nu_\gamma(Q)=\sum_{i\in I}\nu(Q^i).
\end{equation}
\Cref{prop:measureOfNinterval} implies that $\nu(Q_i)=\square f(\gamma^i(0),\gamma^i(x))$, which gives
\begin{equation}\label{Mf}
\mathbb{M}f(x)=\sum_{i\in I}\square f(\gamma^i(0),\gamma^i(x))=\sum_{i\in I}\nu(Q^i).
\end{equation}
Finally, combining \eqref{f} with \eqref{Mf} we obtain
\begin{equation}\label{equ:distribution_of_phi_gamma}
f_\gamma=\mathbb{M}f.
\end{equation}

Let us pass to the proof of the equivalences of \ref{equ:IM(i)}, \ref{equ:IM(ii)}, and \ref{equ:IM(iii)}.

\ref{equ:IM(i)}$\Rightarrow$\ref{equ:IM(ii)}.  Let $\nu$ be $g_\gamma$-invariant. Then  $\nu=\nu_\gamma$ and, so $f=f_\gamma$. In consequence,  \eqref{equ:distribution_of_phi_gamma} gives	$f=\mathbb{M} f_\gamma$.

\ref{equ:IM(ii)}$\Rightarrow$\ref{equ:IM(i)}.  Let us assume that $\mathbb{M}f=f$. Then, by \eqref{equ:distribution_of_phi_gamma}, we have $f=\mathbb{M} f=f_\gamma$. This jointly with \Cref{prop:equality_of_measure_with_the_same_distr} yields $\nu=\nu_\gamma$, and hence
$\nu$ is $g_\gamma$-invariant.

\ref{equ:IM(ii)}$\Rightarrow$\ref{equ:IM(iii)}. Using \cref{thm:MWequatoC1} with $s=1$ and $X=T$ (in this case assertion (\textit{iv}) reduces to (\textit{iv'}) stated at the end of \cref{section9}), we conclude that there exists $\lambda\in \mathbb{R}$ such that $f(x)=\lambda x^N$ for any $x\in T$. Since $f\ge 0$, we get $\lambda\ge 0$. This together with \eqref{equ:distribution_of_Lebesgue_measure} implies that $f=\lambda d_\mu$. Finally, by \Cref{prop:equality_of_measure_with_the_same_distr}, we obtain  $\nu=\lambda\mu$.

\ref{equ:IM(iii)}$\Rightarrow$\ref{equ:IM(ii)}. Assume that there exist $\lambda\ge 0$ such that $\nu=\lambda\mu$. Then \eqref{equ:distribution_of_Lebesgue_measure} implies that $f(x)=d_\nu(x)=d_{\lambda\mu}(x)=\lambda x^N$ for every $x\in T$. Finally, applying  \cref{thm:MWequatoC1} \ref{mainIV} with $s=1$ and $X=T$, we conclude that $f=\mathbb{M}f$.
\end{proof}

We end this paper with the following question.

\begin{problem}
Do the equivalences \ref{equ:IM(i)}$\Leftrightarrow$\ref{equ:IM(ii)}$\Leftrightarrow$\ref{equ:IM(iii)} in \Cref{thm:IM} hold for any Borel measure $\nu$ that is absolutely continuous with respect to the Lebesgue measure $\mu$ on $T$?
\end{problem}


\section*{Acknowledgment}
The research was supported by the University of Silesia, Institute of Mathematics (Iterative Functional Equations and Real Analysis program).

\section*{Disclosure statement}
The authors declare no competing interests.

\section*{Notes on contributors} 
All work regarding this manuscript was carried out by a joint effort of the three authors.

\input{MMZ.bbl}

\end{document}

%% file: MMZ.bbl
\providecommand{\bysame}{\leavevmode\hbox to3em{\hrulefill}\thinspace}
\providecommand{\MR}{\relax\ifhmode\unskip\space\fi MR }
\providecommand{\MRhref}[2]{%
  \href{http://www.ams.org/mathscinet-getitem?mr=#1}{#2}
}
\providecommand{\href}[2]{#2}